\documentclass[12pt, a4paper]{article}
\usepackage[T1]{fontenc}
\usepackage[utf8]{inputenc}
\usepackage[english]{babel} 

\usepackage{fancyvrb}

\usepackage{mathtools}
\usepackage{amsmath}
\usepackage{amsthm}
\usepackage{amssymb}
\usepackage{stmaryrd}

\usepackage{ulem}

\usepackage{xparse}	

\usepackage{array}

\usepackage{algorithm}
\usepackage{algorithmic}
\usepackage{verbatim}
\usepackage[dvipsnames]{xcolor}

\usepackage{geometry}
\geometry {
	a4paper,
	margin=2cm
}
\usepackage{makecell}

\usepackage{cleveref}
\usepackage{comment}
\usepackage{graphicx}
\usepackage{graphics}
\usepackage{subcaption}

\allowdisplaybreaks

\usepackage[backend=bibtex]{biblatex}
\usepackage{csquotes}
\addbibresource{biblio.bib}

%
%

%
%

\newcommand{\N}{\mathbb N}

\newcommand{\R}{\mathbb R}
\newcommand{\C}{\mathbb C}

\newcommand{\abs}[1]{\left | #1 \right |}

\DeclareMathOperator{\sgn}{sgn}

\newcommand{\fctshortdef}[3]{ #1 : #2 \mapsto #3}

\newcommand{\fctlonganonymousdef}[4]{
	\left \{
	\begin{array}{rcl}
		#1	&	\longrightarrow	&	#2	\\
		#3	&	\longmapsto	&	#4	\\
	\end{array}
	\right.
}
\newcommand{\fctlongdef}[5]{
	#1 :
	\fctlonganonymousdef{#2}{#3}{#4}{#5}
}

\newcommand{\set}[2]{\left \{ #1 \, : \, #2 \right \}}
\newcommand{\quickset}[1]{\left \{ #1 \right \}}
\newcommand{\setindic}[1]{\mathbf{1}_{#1}}

\newcommand{\diff}[1]{\, d #1 \,}

\DeclareMathOperator*{\argmin}{argmin}

\newcommand{\eexp}{\operatorname e}

%
%


\DeclareMathOperator{\setspan}{span}

\newcommand{\vecthree}[3]{\begin{pmatrix} #1 \\ #2 \\ #3 \end{pmatrix}}
\DeclareMathOperator{\Tr}{Tr}


\newcommand{\norm}[1]{\left \| #1 \right \|}
\newcommand{\dotprodbracket}[2]{\langle #1, #2 \rangle}


\newcommand{\matset}[3]{{#1}^{#2 \times #3}}
\newcommand{\matsetsquare}[2]{\matset{#1}{#2}{#2}}


\newcommand{\dualitybracket}[2]{\langle #1, #2 \rangle}

\newcommand{\transpose}[1]{ #1^T }


%
%

\DeclareMathOperator{\supp}{supp}
\newcommand{\openball}[2]{B_{#2} \! \left( #1 \right)}

\newcommand{\Cregset}[1]{\mathcal C^{#1}}

\newcommand{\Cregsetsp}[2]{\Cregset {#1} \! \left( {#2} \right)}



\NewDocumentCommand{\smallO}{o m}{o_{\IfValueTF{#1}{{#1}}{}} \! \left( #2 \right)}
\NewDocumentCommand{\bigO}{o m}{\mathcal O \IfValueTF{#1}{_{#1}}{} \! \left( #2 \right)}

%
%

\newcommand{\setclosure}[1]{\overline{#1}}

\newcommand{\setweakclosure}[1]{ {\overline{#1}^w} }
\newcommand{\convexhull}[1]{\mathrm{conv} \left( {#1} \right)}

%
%

\newcommand{\laplacian}{\Delta}

\newcommand{\Lspace}[1]{L^{#1}}
\newcommand{\Lspacesp}[2]{L^{#1} \! \left( #2 \right)}
\newcommand{\Lone}{\Lspace 1}
\newcommand{\Lonesp}[1]{\Lspacesp 1 {#1}}
\newcommand{\Ltwo}{\Lspace 2}
\newcommand{\Ltwosp}[1]{\Lspacesp 2 {#1}}

\newcommand{\TestFunctionset}{\mathcal D}
\newcommand{\TestFunctionsetsp}[1]{\TestFunctionset \! \left( #1 \right)}
\newcommand{\Distribset}{\TestFunctionset'}
\newcommand{\Distribsetsp}[1]{\Distribset \! \left( #1 \right)}


\newcommand{\Hspacesp}[2]{H^{#1} \! \left( #2 \right)}

\newcommand{\Honesp}[1]{\Hspacesp 1 {#1}}

\DeclareMathOperator{\Fourier}{\mathcal F}

\newtheorem*{theorem*}{Theorem}

\newtheorem*{lemma*}{Lemma}
\newtheorem{lemma}{Lemma}
\newtheorem*{proposition*}{Proposition}
\newtheorem{proposition}{Proposition}
\newtheorem*{corollary*}{Corollary}
\newtheorem{corollary}{Corollary}
\newtheorem*{definition*}{Definition}
\newtheorem{definition}{Definition}
\newtheorem*{example*}{Example}

\newtheorem*{remark*}{Remark}
\newtheorem{remark}{Remark}

\numberwithin{theorem}{section}
\numberwithin{lemma}{section}
\numberwithin{proposition}{section}
\numberwithin{corollary}{section}
\numberwithin{definition}{section}
\numberwithin{example}{section}
\numberwithin{remark}{section}

\newcommand{\HHilb}{\mathcal H}


\title{Low-complexity approximations with least-squares formulation of the time-dependent Schrödinger equation}
\author{Mi-Song Dupuy, Virginie Ehrlacher, Clément Guillot}

\begin{document}

\maketitle

\begin{abstract}
	We propose new methods designed to numerically approximate the solution to the time dependent Schrödinger equation, based on two types of ansatz: tensors, and approximation by a linear combination of gaussian wave packets. In both cases, the method can be seen as a restricted optimization problem, which can be solved by adapting either the Alternating Least Square algorithm in the tensor case, or some greedy algorithm in the gaussian wavepacket case. We also discuss the efficiency of both approaches.
\end{abstract}

\section{Introduction} \label{sec:Intro}
Throughout this paper, we will discuss several low-complexity numerical methods for the resolution of the many-body time-dependent Schrödinger equation: for $T>0$ some final time, find $u\in L^2((0,T); \HHilb)$ such that
\begin{equation} \label{equ:Schrodinger equation}
	\begin{cases}
		i\partial_t u(t) = (H + B(t))u(t), \\
		u(0) = u_0,
	\end{cases}
\end{equation}
where $\HHilb$ a separable Hilbert space and $u_0\in \HHilb$ some initial condition. 
The operator $H$ is a (possibly unbounded) self-adjoint operator on $\HHilb$, and $(B(t))_{t\in (0,T)}$ a strongly continuous family of bounded self-adjoint operators on $\HHilb$.
This setup is motivated by quantum mechanics: for instance, for a molecular system with $N\in \mathbb{N}^*$ electrons, we will have $\HHilb = \Ltwosp{\R^{3N}}$, and
\begin{equation}\label{eq:electronic}
	H = - \sum_{i=1}^N \laplacian_{x_i} + \sum_{j=1}^N V_{ext}(t, x_j) + \sum_{1 \leq j < k \leq N} \frac{1}{\abs{x_j - x_k}},
\end{equation}
with $V_{ext}: \mathbb{R}^3 \to \mathbb{R}\cup\{\pm \infty\}$ an external potential, representing, for example, the interactions with the nuclei or an external electronic field. The practical resolution of this equation raises several difficulties. First, when $N$ is large, standard numerical approaches are prone to the curse of dimensionality and it thus becomes necessary to use low-complexity representations of the solution. Second, the equation must be solved in an unbounded domain (all of $\R^{3N}$), and due to the dispersive properties of the equation it is a priori impossible to localize the solution within a reasonably fixed large box. Although absorbing boundary conditions \cite{antoine_friendly_2017} can be used to recover the solution close to the nuclei, they are not capable of capturing the scattering part of the wave packet.
For all these reasons, it is often necessary to rely on a nonlinear approximation set, that is, a set which is not a linear subspace of the full space $\HHilb$, but only a subset $\Sigma \subset \HHilb$ of functions that can be represented with a low complexity \textit{i.e} a set with a small number of parameters).
Since this kind of approximation does not preserve the linear structure, it is no longer possible to rely on usual Galerkin discretization techniques combined with a time stepping scheme. The low-complexity subsets that have been used in practice are
\begin{enumerate}
    \item linear combinations of polynomial Gaussian wave packets~\cite{Hagedorn_1998, Lasser_Lubich_2020};
    \item structured low-rank tensor formats, such as \textit{Time-Dependent Density-Matrix Renormalization Group} (TD-DMRG, also called \textit{Tensor Trains})~\cite{Haegeman_Lubich_Oseledets_Vandereycken_Verstraete_2016} or hierarchical Tucker format under the name \textit{Multi-Configurational Time-Dependent Hartree} (MC-TDH)~\cite{Meyer_Manthe_Cederbaum_1990}.
\end{enumerate}

The most commonly employed approach to build a time-dependent low-complexity approximation $\widetilde{u}$ of $u$ out of a low-complexity subset $\Sigma$ is based on the so-called \textit{Dirac-Frenkel principle} \cite{falco2019dirac, faou2023modulation, faou2006poisson, feischl2024regularized, haegeman2016unifying, koch_dynamical_2007, lasser2022various, scheifinger2025time, secular2020parallel}.
The main principle is the following: assume that the approximation $\Sigma$ is a smooth manifold. Now, for each $x \in \Sigma$, denote by $T_x \Sigma$ the tangent space of $\Sigma$ at $x$, and $\pi_{T_x \Sigma}$ the orthogonal projector on this space. Then, the Dirac-Frenkel principle consists in solving the projected equation
\begin{equation} \label{equ:Dirac-Frenkel equation}
	\begin{cases}
		i\partial_t \tilde u(t) = \pi_{T_{\tilde u(t)}} ((H + B(t))\tilde{u}(t)), \\
		\tilde u(0) = \tilde u_0 \in \Sigma,
	\end{cases}
\end{equation}
where $\tilde u_0$ is an approximation of $u_0$ in $\Sigma$. If $\Sigma$ is a manifold of finite dimension, it can be (locally) parametrized, and \eqref{equ:Dirac-Frenkel equation} can be written as an ordinary differential equation in terms of the parameters. Equation~\eqref{equ:Dirac-Frenkel equation} is then solved using suitable time-stepping methods~\cite{hairer_geometric_2006}.

The Dirac-Frenkel principle has advantageous numerical properties. For example, the approximations that are obtained from the Dirac-Frenkel principle preserve the $\HHilb$ norm of the initial condition, as well as the energy (in the case $B(t)=0$). 
The main drawback of the Dirac-Frenkel principle is the lack of well-posedness for arbitrary times~\cite{koch_dynamical_2007}. 
In practice, it is often possible to circumvent this theoretical difficulty, but in this paper, our goal is to investigate another approach, based on a global space-time least squares formulation of the Schrödinger equation, which was studied in a previous work in~\cite{dupuy_space-time_2024}.

One of the advantage of using the global space-time approach is that, to prove the existence of a \itshape global-in-time \normalfont dynamical low-complexity approximation, one only needs the low-complexity subset $\Sigma$ to be a weakly closed subset of $\HHilb$. 
Moreover, the least-square formulation provides guaranteed and easy-to-compute a posteriori error estimators to assess the accuracy of the obtained dynamical low-complexity approximation.
In this paper, we discuss the advantages and disadvantages of this new approach when used in conjunction with low-rank tensor formats and linear combinations of Gaussian wave packets.

The outline of the present paper is the following: in Section~\ref{sec:Preliminaries}, we introduce some notation and recall some basic facts about the least-squares formulation of the time-dependent Schrödinger equation which were proved in~\cite{dupuy_space-time_2024}.  We also prove some interesting results related to general dynamical low-complexity approximations. In Section~\ref{sec:Tensors}, we focus our attention to the specific case where the low-complexity subset $\Sigma$ is given using some low-rank tensor format. We compare in particular on some toy numerical examples dynamical low-rank approximations obtained through the Dirac-Frenkel principle and the least-squares variational formulation and discuss about the pros and cons of both approaches. In Section~\ref{sec:Gaussians}, we comment on the use of the least-squares formulation in order to build a dynamical low-complexity approximation using the least-squares formulation. In particular, we show the interest of coupling this formulation together with some greedy algorithm in order to obtain some dynamical approximation of the solution of the time-dependent Schrödinger problem.



\section{Preliminaries}\label{sec:Preliminaries}

In this section, we recall some basic facts about the least-squares formulation of the time-dependent Schrödinger equation introduced in~\cite{dupuy_space-time_2024}. We also explain the general variational principle used in order to define low-complexity dynamical approximations and state some preliminary results.

\subsection{Least-squares formulation of the time-dependent Schrödinger equations}

In our previous work~\cite{dupuy_space-time_2024}, we considered the time-dependent equation (\ref{equ:Schrodinger equation}) with an Hamiltonian operator $H$ of the form $H(t) = H_0 + W + B(t)$ with $H_0$ any self-adjoint operator on $\HHilb$ with domain $D(H_0)$, $W$ a closed symmetric operator on $\HHilb$ such that $D(H_0) \subset D(W)$, and $B(t)$ a strongly continuous family of bounded self-adjoint operators.
In addition, we assume that there exists $\varepsilon>0$ with
$$
\mathop{\sup}_{\lambda \in \mathbb{R}} \|W (H_0 - \lambda \pm i \varepsilon)^{-1}\|_{\mathcal L(\HHilb)} < 1,
$$
where $\|\cdot\|_{\mathcal L(\HHilb)}$ denotes the operator norm on the set of bounded operators on $\HHilb$.

Let $u$ be the unique solution to (\ref{equ:Schrodinger equation}) and $I:=(0,T)$. Then, we can define $v \in \Honesp{I, \HHilb}$ by
\begin{equation}
\forall t\in [0,T], \quad v(t) = \eexp^{itH_0} u(t).
\end{equation}
The function $v \in \Honesp{I, \HHilb}$ is the only solution to
\begin{equation}
	\begin{cases}
		i\partial_t v = \eexp^{itH_0} (W + B(t)) \eexp^{-itH_0} v, \\
		v(0) = u_0. 
	\end{cases}
\end{equation}
The element $v$ can then be reformulated as the unique solution to the minimization problem
\begin{equation} \label{equ:Schrodinger variational}
	\text{Find $v \in \Honesp{I, \HHilb}$ such that} \quad v = \argmin_{w \in \Honesp{I, \HHilb}} F(w),
\end{equation}
with
\begin{equation} \label{equ:Variational functional}
	F(w)
	=	\norm{w(0) - u_0}_\HHilb^2 + T \norm{(i\partial_t - \eexp^{itH_0} (W + B(t)) \eexp^{-itH_0}) w}_{\Ltwosp{I, \HHilb}}^2.
\end{equation}
Indeed, it then holds that $F$ is a quadratic continuous strongly convex functional on $\Honesp{I, \HHilb}$.

\subsection{Dynamical low-complexity approximation}\label{sec:LS}

For a given low-complexity subset $\Sigma \subset \HHilb$, we consider the following variational principle to define some dynamical low-complexity approximation of $u$ using the least-squares formulation~\eqref{equ:Schrodinger variational}: 
\begin{equation}\label{equ:Schrodinger variationallowcomp}
	\text{Find $\widetilde{v} \in \Honesp{I, \Sigma}$ such that } \widetilde{v} \in \argmin_{\widetilde{w} \in \Honesp{I, \Sigma}} F(\widetilde{w}),
\end{equation}
where  $\Honesp{I, \Sigma} = \set{\widetilde{w} \in \Honesp{I, \HHilb}}{\widetilde{w}(t) \in \Sigma \text{ for any $t \in I$}}$.
An approximation of the initial time-dependent equation is retrieved from $\widetilde{u}$ defined by
\[
    \forall t\in [0,T], \quad \widetilde{u}(t):= \eexp^{-itH_0}\widetilde{v}(t).
\]

One advantage of formulation \eqref{equ:Schrodinger variationallowcomp} is the following result:
\begin{proposition} \label{prop: Weak closedness}
    Let $\Sigma$ be any non empty weakly closed subset of $\HHilb$. Then the subset $\Honesp{I, \Sigma}$ is closed in $\Honesp{I, \HHilb}$ for the weak topology.
\end{proposition}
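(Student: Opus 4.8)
The plan is to realise $\Honesp{I, \Sigma}$ as an intersection of preimages of $\Sigma$ under the pointwise evaluation maps, and to exploit the fact that each such evaluation map is a bounded linear operator, hence continuous for the respective weak topologies. This reduces the statement to two standard facts: a one-dimensional Sobolev embedding, and the preservation of closedness under continuous preimages.

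First I would recall the one-dimensional Sobolev embedding for Hilbert-space-valued functions: the space $\Honesp{I, \HHilb}$ embeds continuously into $\Czerosetsp{\bar I, \HHilb}$ (this follows from writing $\tilde w(t) - \tilde w(s) = \int_s^t \tilde w'(\tau)\, d\tau$ for the continuous representative). Consequently, each $\tilde w \in \Honesp{I, \HHilb}$ admits a continuous representative, and for every fixed $t \in I$ the pointwise evaluation
\[
\fctlongdef{\mathrm{ev}_t}{\Honesp{I, \HHilb}}{\HHilb}{\tilde w}{\tilde w(t)}
\]
is a well-defined bounded linear operator. This is the only genuine analytic input, and it is the step I expect to carry the technical weight, although it is entirely classical.

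Next I would use the elementary fact that a bounded linear operator between normed spaces is continuous for the respective weak topologies: since $\mathrm{ev}_t$ is norm-continuous and linear, its composition with any continuous linear functional on $\HHilb$ is a continuous linear functional on $\Honesp{I, \HHilb}$, which is exactly weak-weak continuity. Therefore, for each fixed $t \in I$, the set
\[
C_t := \mathrm{ev}_t^{-1}(\Sigma) = \set{\tilde w \in \Honesp{I, \HHilb}}{\tilde w(t) \in \Sigma}
\]
is weakly closed, being the preimage of the weakly closed set $\Sigma$ under a weak-weak continuous map.

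Finally I would observe that, by the very definition of $\Honesp{I, \Sigma}$,
\[
\Honesp{I, \Sigma} = \bigcap_{t \in I} C_t,
\]
and that an arbitrary intersection of weakly closed sets is weakly closed. This yields the claim directly, without any appeal to sequences or nets, and in particular it sidesteps the non-metrizability of the weak topology on the infinite-dimensional space $\Honesp{I, \HHilb}$. I would emphasise that weak closedness of $\Sigma$ enters only through this preimage argument and that convexity of $\Sigma$ is never used, which is precisely the feature making the statement useful for nonlinear low-complexity sets.
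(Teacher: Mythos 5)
Your proof is correct, and it rests on exactly the same two analytic ingredients as the paper's: the embedding $\Honesp{I, \HHilb} \hookrightarrow \Czerosetsp{\bar I, \HHilb}$, which makes each evaluation $\mathrm{ev}_t$ a bounded linear map, and the fact that a bounded linear map between Banach spaces is continuous for the weak topologies. Where you differ is the packaging: the paper takes a sequence $(v_n)_{n\geq 0} \subset \Honesp{I, \Sigma}$ weakly converging to some $v$ and concludes $v(t) \in \Sigma$ for each $t$, i.e.\ it verifies weak \emph{sequential} closedness, whereas you write $\Honesp{I, \Sigma} = \bigcap_{t \in I} \mathrm{ev}_t^{-1}(\Sigma)$ and invoke stability of closedness under continuous preimages and arbitrary intersections. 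Your formulation is in fact slightly stronger than what the paper proves: since the weak topology on the infinite-dimensional space $\Honesp{I, \HHilb}$ is not metrizable, weak sequential closedness does not formally imply closedness for the weak topology, which is what the proposition literally asserts; your preimage argument delivers the topological statement directly, as you yourself note. For the paper's purpose — \Cref{cor:existence}, where existence of a minimizer of the coercive, weakly sequentially lower semicontinuous functional $F$ is obtained by the direct method, extracting a weakly convergent subsequence from a bounded minimizing sequence in the reflexive space $\Honesp{I, \HHilb}$ — sequential weak closedness is all that is actually used, so the paper's proof suffices for the application; still, your version matches the statement as written at no extra cost, and your closing remark that weak closedness of $\Sigma$ (and never convexity) is the only hypothesis used is precisely the point that makes the result applicable to the nonlinear sets $\Sigma_r$ and $\mathcal G_\nu$ later in the paper.
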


\begin{proof}
    Since $\Honesp{I, \HHilb} \hookrightarrow \Cregsetsp{0}{I, \HHilb}$, for any $t \in I$ the linear application
    \[
        \fctlongdef{T_t}{\Honesp{I, \HHilb}}{\HHilb}{v}{v(t)}
    \]
    is continuous. \\
    Let $(v_n)_{n \geq 0} \subset \Honesp{I, \Sigma}$ be a sequence which weakly converges to some $v$ in $\Honesp{I, \HHilb}$. Since any bounded linear operator between Banach spaces is also weakly continuous, this implies that for any $t \in I$,
    \[
        v_n(t) = T_t v_n \mathop{\rightharpoonup}_{n\to +\infty} T_t v = v(t) \quad
        \text{in $\HHilb$},
    \]
    and the weak closedness of $\Sigma$ implies $v(t) \in \Sigma$.
\end{proof}

The following corollary is then an immediate consequence of Proposition~\ref{prop: Weak closedness} and of the fact that $F$ is a quadratic continuous strongly convex functional on $\Honesp{I, \HHilb}$. 
\begin{corollary}\label{cor:existence}
There always exists at least one solution $\widetilde{v}$ to~\eqref{equ:Schrodinger variationallowcomp}. 
\end{corollary}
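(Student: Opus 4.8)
The plan is to invoke the direct method of the calculus of variations, assembling the two ingredients the preceding text has already isolated: the weak closedness of the constraint set furnished by Proposition~\ref{prop: Weak closedness}, and the fact that $F$ is a continuous, strongly convex, quadratic functional on the Hilbert space $\Honesp{I, \HHilb}$. First I would check that the feasible set is non-empty and the infimum is finite. Since $\Sigma \neq \emptyset$, pick any $\sigma_0 \in \Sigma$; the constant map $t \mapsto \sigma_0$ belongs to $\Honesp{I, \HHilb}$ and takes values in $\Sigma$, so it lies in $\Honesp{I, \Sigma}$. As $F$ is manifestly nonnegative from its definition~\eqref{equ:Variational functional} (a sum of squared norms), the infimum $m := \inf_{\widetilde{w} \in \Honesp{I, \Sigma}} F(\widetilde{w})$ satisfies $0 \le m \le F(\sigma_0) < +\infty$.

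Next I would run the compactness argument on a minimizing sequence $(\widetilde{w}_n)_{n\geq 0} \subset \Honesp{I, \Sigma}$ with $F(\widetilde{w}_n) \to m$. Strong convexity of $F$ provides a coercive quadratic lower bound of the form $F(w) \geq \tfrac{\alpha}{2}\norm{w}_{\Honesp{I, \HHilb}}^2 - C$ for some $\alpha > 0$ and $C \geq 0$, which forces $(\widetilde{w}_n)$ to be bounded in $\Honesp{I, \HHilb}$. Since $\Honesp{I, \HHilb}$ is a Hilbert space, hence reflexive, I can extract a subsequence (not relabeled) with $\widetilde{w}_n \rightharpoonup \widetilde{v}$ weakly for some $\widetilde{v} \in \Honesp{I, \HHilb}$. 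By Proposition~\ref{prop: Weak closedness}, the set $\Honesp{I, \Sigma}$ is weakly closed, so the weak limit satisfies $\widetilde{v} \in \Honesp{I, \Sigma}$. Finally, a convex continuous functional on a Banach space is weakly lower semicontinuous, whence $F(\widetilde{v}) \leq \liminf_{n} F(\widetilde{w}_n) = m$; combined with $\widetilde{v} \in \Honesp{I, \Sigma}$, which gives $F(\widetilde{v}) \geq m$, this yields $F(\widetilde{v}) = m$, so $\widetilde{v}$ solves~\eqref{equ:Schrodinger variationallowcomp}.

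The point requiring the most care, and the reason the argument cannot be shortened to a textbook Hilbert-space projection, is that $\Sigma$ is only assumed weakly closed and not convex, so $\Honesp{I, \Sigma}$ is in general a non-convex constraint set. This rules out uniqueness and any argument based on projecting onto a closed convex set. What survives, however, is exactly the combination exploited above: coercivity and weak lower semicontinuity of $F$ together with weak closedness of the admissible set are enough for the direct method to deliver a minimizer even on a non-convex set. The weak closedness, which is the only non-elementary input, is precisely what Proposition~\ref{prop: Weak closedness} supplies, so the corollary follows without further work.
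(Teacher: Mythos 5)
Your proof is correct and is exactly the argument the paper has in mind: the paper states the corollary as an immediate consequence of Proposition~\ref{prop: Weak closedness} together with the continuity, strong convexity (hence coercivity and weak lower semicontinuity), and quadratic structure of $F$, and your write-up simply spells out this direct method in full, including the non-emptiness of $\Honesp{I, \Sigma}$ via a constant function. Your closing remark correctly identifies why weak closedness, rather than convexity of $\Sigma$, is the operative hypothesis, which matches the paper's emphasis.
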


\subsection{Importance of $H^1$ regularity}

We would like to emphasize here that the existence result stated in Corollary~\ref{cor:existence} only holds thanks to the sufficient regularity with respect to $t$. Indeed,  we also have the following result which states that $\Ltwosp{I, \Sigma}$ is not necessarily a weakly closed subset of $\Ltwosp{I, \HHilb}$, even if $\Sigma$ is a weakly closed subset of $\HHilb$.
\begin{proposition}
	Let $\Sigma \subset \HHilb$. Then
	\[
		\setweakclosure{\Ltwosp{I, \Sigma}}
		=	\Ltwosp{I, \setclosure{\convexhull \Sigma}}
	\]
	where $\setweakclosure{\Ltwosp{I, \Sigma}}$ is the closure of $L^2(I, \Sigma)$ in $L^2(I, \HHilb)$ for the weak topology, $\convexhull \Sigma$ is the convex hull of $\Sigma$, and $\setclosure{\convexhull \Sigma}$ is the closure of $\convexhull \Sigma$ in $\HHilb$.
\end{proposition}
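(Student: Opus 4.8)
The plan is to prove the two inclusions separately, writing $K := \setclosure{\convexhull \Sigma}$ for the closed convex hull of $\Sigma$, and understanding $\Ltwosp{I, \Sigma}$ as the set of $w \in \Ltwosp{I,\HHilb}$ with $w(t) \in \Sigma$ for a.e.\ $t$. The inclusion $\setweakclosure{\Ltwosp{I, \Sigma}} \subseteq \Ltwosp{I, K}$ is the soft one. Here I would observe that $\Ltwosp{I, K}$ is convex (because $K$ is convex) and strongly closed in $\Ltwosp{I, \HHilb}$: if $w_n \to w$ strongly then a subsequence converges a.e., and since $K$ is closed this forces $w(t) \in K$ a.e. By Mazur's theorem a convex, strongly closed subset of a Banach space is weakly closed, and $\Ltwosp{I, K}$ visibly contains $\Ltwosp{I, \Sigma}$; hence it contains the weak closure $\setweakclosure{\Ltwosp{I, \Sigma}}$.

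The reverse inclusion $\Ltwosp{I, K} \subseteq \setweakclosure{\Ltwosp{I, \Sigma}}$ is the heart of the matter, and it is where the special structure of $\Ltwosp{I, \cdot}$ enters through a relaxation (fine-oscillation) argument. The elementary building block I would treat first is a single convex combination: let $c = \sum_{k=1}^m \lambda_k \sigma_k$ with $\sigma_k \in \Sigma$, $\lambda_k \geq 0$, $\sum_k \lambda_k = 1$, and let $J \subseteq I$ be a subinterval. Partitioning $J$ into $n$ equal pieces, subdividing each piece into $m$ intervals of relative lengths $\lambda_1, \dots, \lambda_m$, and assigning the value $\sigma_k$ on the $k$-th of these, I obtain simple functions $w_n$ taking values in $\Sigma$, uniformly bounded in $\Ltwosp{J,\HHilb}$ (they use finitely many values on a bounded interval). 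Testing $w_n$ against an arbitrary $\phi \in \Ltwosp{I,\HHilb}$ and reducing, via simple-function approximation of $\phi$, to the scalar periodic Riemann–Lebesgue lemma, one checks that $w_n \rightharpoonup c\,\mathbf{1}_J$ weakly. As $\Ltwosp{I,\HHilb}$ is a Hilbert space, this places $c\,\mathbf{1}_J$ in $\setweakclosure{\Ltwosp{I,\Sigma}}$, and by summing over a finite partition of $I$ (and taking a common scale $n$) the same holds for every simple function with values in $\convexhull \Sigma$.

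To conclude I would pass from these simple functions to a general $v \in \Ltwosp{I, K}$ by a density argument. Simple functions with values in $\convexhull \Sigma$ are strongly dense in $\Ltwosp{I, K}$: any Bochner-measurable $K$-valued $v$ is an a.e.\ and (by dominated convergence) $\Ltwosp{}$-limit of simple functions whose values are among the values of $v$, hence lie in $K$; and each such value in $K = \setclosure{\convexhull \Sigma}$ is approximated by an element of $\convexhull \Sigma$. Since $\setweakclosure{\Ltwosp{I,\Sigma}}$ is weakly closed it is a fortiori strongly closed, so it contains this strong closure, namely all of $\Ltwosp{I, K}$. Combined with the first inclusion this gives the stated equality.

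I expect the main obstacle to be making the oscillation step fully rigorous in the vector-valued setting, i.e.\ justifying the weak convergence $w_n \rightharpoonup c\,\mathbf{1}_J$ uniformly over test functions $\phi \in \Ltwosp{I,\HHilb}$ rather than over a fixed $\phi \in \HHilb$; the density step, while requiring some care with Bochner measurability and measurable selection, is otherwise routine.
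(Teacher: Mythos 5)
Your proposal is correct and follows essentially the same route as the paper: the easy inclusion via convexity plus strong closedness (Mazur), and the hard inclusion by reducing to simple functions with values in $\convexhull \Sigma$ and realizing each convex combination as the weak limit of finely oscillating $\Sigma$-valued functions. The only difference is cosmetic: where the paper invokes a ``well-known homogenization result'' for the weak convergence of the rescaled periodic functions $g_{i,\varepsilon}$, you spell out the same fact via the Riemann--Lebesgue lemma after reducing to scalar test functions, which is a perfectly adequate substitute.
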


\begin{proof}
	Let $C = \convexhull \Sigma$. It is immediate that $\Ltwosp{I, \setclosure C}$ is a convex and closed set for the norm topology of $L^2(I, \HHilb)$, hence weakly closed. Moreover, it contains $\Ltwosp{I, \Sigma}$, which gives the inclusion $\setweakclosure{\Ltwosp{I, \Sigma}} \subset \Ltwosp{I, \setclosure{\convexhull \Sigma}}$.
	
	It remains to prove the opposite inclusion.
	We first observe that the set of all $u \in \Ltwosp{I, \setclosure C}$ which can be written as $u=\sum_{i=1}^r \setindic{(a_i, b_i)} v_i$ with $(a_i, b_i)$ pairwise disjoint intervals and such that $\bigcup_i [a_i, b_i] = I$, and $v_i \in C$, is dense in $\Ltwosp{I, \setclosure C}$ for the norm topology, hence for the weak topology.
	Considering an element $u$ under the preceding form, it only remains to prove that there exists a sequence $(u_n)_{n\geq 0} \subset \Ltwosp{I, \Sigma}$ the weak limit of which is equal to $u$. For all $1\leq i\leq r$, there exist $P_i \in \mathbb{N}^*$ and $((\theta^i_p, \sigma^i_p))_{p=1,...,P_i} \in \left([0, 1] \times \Sigma\right)^{P_i}$ such that $\theta^i_1 + ... + \theta^i_{P_i} = 1$ and $\displaystyle v_i = \sum_{p=1}^{P_i} \theta^i_p \sigma^i_p$.
	For $s \in [0, 1)$, set
	\[
		f_i(s) = \sigma^i_p \quad \text{for $\theta^i_1 + ... + \theta^i_{p-1} \leq s < \theta^i_1 + ... + \theta^i_p$},
	\]
	which guarantees that $\int_0^1 f_i(s) \diff s = \sum_{p=1}^{P_i} \theta^i_p \sigma^i_p = v_i$, and extend $f_i$ to $\R$ by periodicity. Moreover, for almost all $s\in \mathbb{R}$, $f_i(s)\in \Sigma$ by construction. Set now, for all $\varepsilon>0$, 
	\[
	\forall t\in I, \quad	g_{i, \varepsilon}(t) = \setindic{(a_i, b_i)}(t) f_i \left( \frac{t}{\varepsilon} \right).
	\]
	It is a well-known homogenization result that $g_{i, \varepsilon} \xrightharpoonup[\varepsilon \to 0]{} \setindic{(a_i, b_i)} v_i$ in $L^2(I, \HHilb)$, and therefore that $\sum_{i=1}^r g_{i, \varepsilon} \xrightharpoonup[\varepsilon \to 0]{} u$ in $L^2(I, \HHilb)$. Moreover, since the intervals $(a_i, b_i)$ are pairwise disjoint, we indeed have $\sum_{i=1}^r g_{i, \varepsilon} \in \Ltwosp{I, \Sigma}$, which concludes the proof.
\end{proof}

The next sections of the article are devoted to the study of new methodologies to build dynamical low complexity approximations of the time-dependent Schrödinger equation, relying on the least-square variational formulation recalled above.

Section~\ref{sec:Tensors} is devoted to the case where $\Sigma$ is chosen as some low-rank tensor format whereas Section~\ref{sec:Gaussians} is focused on time-dependent gaussian wave packets approximations. \\
In both cases, the procedure can be roughly described as follows:
\begin{enumerate}
	\item Pick some weakly closed low-complexity set $\Sigma$ for the static problem.
	\item Use a suitable algorithm to find a solution to the global space-time problem
	\begin{equation}
		\argmin_{w \in \Honesp{I, \Sigma}} F(w).
	\end{equation}
\end{enumerate}

\section{Low-rank tensor formats} \label{sec:Tensors}

The aim of this section is devoted to some theoretical and numerical considerations about dynamical low-rank approximations of the time-dependent Schrödinger equation with the least-squares formulations presented in Section~\ref{sec:LS}.

\subsection{Finite-dimensional matrix case}

In practice, in the numerical test cases which will be highlighted below, we consider $\HHilb$ to be a finite-dimensional Hilbert space. More precisely, for the sake of simplicity, we consider some particular cases of \eqref{equ:Schrodinger equation} where $\HHilb = \mathbb{C}^{L_x \times L_y}$ for some $L_x, L_y\in \N^*$.

As a consequence, we consider various low-rank approximations of the solution to the matrix-valued Schrödinger equation
\begin{equation} \label{equ:Schrodinger matrix}
	\begin{cases}
		i\partial_t U(t) = \mathbb H(t, U(t)), \quad t\in I\\
		U(0) = U_0 \in \C^{L_x \times L_y},
	\end{cases}
\end{equation}
where $U: I \to \C^{L_x \times L_y}$ and  $\mathbb H: I \times \C^{L_x \times L_y} \to \C^{L_x \times L_y}$ is such that for each $t \in \R$, $\mathbb{H}(t, \cdot)$ is a self-adjoint linear operator with respect to the natural Frobenius inner product on matrices
\begin{equation}
	\forall M, N \in \C^{L_x \times L_y}, \quad
	\dotprodbracket{M}{N}_{\C^{L_x \times L_y}}
	=	\Tr{(M^*N)}.
\end{equation}
This is a particular case of  where $\HHilb = \C^{L_x \times L_y}$ is finite dimensional.
If $L_x$ and $L_y$ are large, a low-rank approximation can be used to significantly reduce the computational cost.
To this end, for a fixed rank $1 \leq r \leq \min(L_x, L_y)$, we define $\Sigma_r$ the set of all matrices of size $L_x \times L_y$ and rank lower than $r$
\begin{equation} \label{equ:Low rank set}
	\Sigma_r
	=	\set{M = AB \in \C^{L_x \times L_y}}{A \in \R^{L_x \times r}, \quad B \in \R^{r \times L_y}}.
\end{equation}

In that case, we would like to find an approximation $\widetilde{U} \in H^1(I,\Sigma_r)$ to the solution of the Schrödinger evolution~\eqref{equ:Schrodinger matrix}.
The set $\Sigma_r$ is not a smooth manifold. Nevertheless, it is still possible to compute dynamical low rank approximations using the so-called Dirac-Frenkel variational principle using for instance the algorithm proposed in~\cite{Ceruti_Kusch_Lubich_2022, lubich_projector-splitting_2014}. 
In this section, we want to compare the Dirac-Frenkel principle with our approach based on the global-space time formulation of the Schrödinger evolution.

We want to highlight that this simple example stands as a prototypical example of a discretization of high-dimensional problems using low-rank tensor formats. Possible low-rank tensors could be tensor trains~\cite{oseledets_tensor-train_2011} or hierarchical Tucker formats~\cite{Grasedyck_2010, Oseledets_Tyrtyshnikov_2009}. 
For simplicity, and since we want to compare the global-space time approach with the Dirac-Frenkel principle, we restrict the discussion to the matrix case.

\subsection{Description of the method (ALS)} \label{subsec:ALS tensor}
We describe here the algorithm that we use for our space-time minimization process in the context of a low-rank approximation to the solution of the Schrödinger equation.

Fix $r \in \mathbb{N}^*$ an approximation rank and $N \in \mathbb{N}^*$ the number of time steps, and consider the set
\begin{equation}
	V_{r, N}
	= \set{\sum_{k=0}^N \zeta_k(t) A_k \transpose{B_k}}{A_k \in \C^{L_x \times r}, \quad B_k \in \C^{L_y \times r}},
\end{equation}
where the $(\zeta_k)_{k=0,...,N}$ are the $\mathbb{P}^1$ hat functions associated to the uniform discretization mesh in time of the interval $(0,T)$ with time step $\Delta t = \frac T N$.
We also replace the functional $F$ defined in \eqref{equ:Variational functional} by a discrete version defined on $V_{r, N}$ by
\begin{equation}
	F_N \left( \sum_{k=0}^N \zeta_k A_k \transpose{B_k} \right)
	=	\int_0^T \diff t \norm{\sum_{k=0}^K i\zeta_k'(t) A_k \transpose{B_k} - \sum_{k=0}^N \zeta_k(t) \mathbb H \left( \frac{kT}{N}, A_k \transpose{B_k} \right)}^2, 
\end{equation}
where $\|\cdot\|$ denotes the Frobenius norm on $\C^{L_x\times L_y}$.
The difference between $F$ and $F_N$ is that the term $g(t) = H(t, \sum_{k=0}^N\zeta_k(t) A_k \transpose{B_k})$ is replaced by a discrete version $g_N(t) = \sum_{k=0}^N \zeta_k(t) \mathbb H \left( \frac{kT}{N}, A_k \transpose{B_k} \right)$ which allows direct computations of the integrals.
If we assume that $\mathbb H$ is smooth enough, we have
\begin{equation}
    \norm{g - g_N}_{\Ltwosp{I, \C^{L_x \times L_y}}}
    \leq    C \Delta t \norm g_{\Honesp{I, \C^{L_x \times L_y}}}.
\end{equation}
Therefore, it is reasonable to consider that, by taking a suficiently small $\Delta t$, the error introduced by the time discretization is  negligeable compared to the error induced by the truncation of the rank.
Therefore, in the numerical section, we will no longer consider this problem.

The problem to solve then reads as
\begin{equation} \label{equ:Schrodinger variational tensor}
	\text{Find $v \in V_{r, N}$ such that} \quad F_N(v) = \min_{w \in V_{r, N}} F_N(w).
\end{equation}
One may notice that $V_{r, N}$ is not \textit{stricto sensu} a subset of $\Honesp{I, \Sigma_r}$. Since enforcing this constraint would make the implementation significantly more challenging without any clear benefit, we choose to disregard this fact and work with $V_{r, N}$.

The problem \eqref{equ:Schrodinger variational tensor} is not linear, but it has a \textit{quadratic} structure with respect to the $A_k$'s and $B_k$'s. 
To reduce the complexity of the problem, we propose to solve it using an Alternating Least Squares (ALS) algorithm. 
In this algorithm, the minimization problem is solved by iteratively freezing the matrices $(A_k)$ and optimizing over $(B_k)$, then freezing the optimized $(B_k)$ and optimizing over the $(A_k)$. 
This process is repeated until convergence \cite{schollwock_density-matrix_2011, uschmajew_local_2012}.
\begin{algorithm}[h!]
	\caption{Alternating Least Square} \label{alg:ALS}
	\begin{algorithmic}
		\STATE \%Initialization can be random, or may come from an initial guess
		\STATE Initialize $(A_k)_{k=0,...,N}, (B_k)_{k=0,...,N}$.
		\WHILE{Number of iterations not reached}
		\STATE Compute
		\[
			(\tilde A_k)_{k=0,...,N} \in
			\argmin_{(C_k)_{k=0,...,N}} F_N \left( \sum_{k=0}^N \zeta_k(t) C_k \transpose{B_k} \right)
		\]
		\STATE Compute
		\[
			(\tilde B_k)_{k=0,...,N} \in
			\argmin_{(D_k)_{k=0,...,N}} F_N \left( \sum_{k=0}^N \zeta_k(t) \tilde A_k \transpose{D_k} \right)
		\]
		\STATE Update $(A_k)_{k=0,...,N} \gets (\tilde A_k)_{k=0,...,N}$
		\STATE Update $(B_k)_{k=0,...,N} \gets (\tilde B_k)_{k=0,...,N}$
		\ENDWHILE
	\end{algorithmic}
\end{algorithm}

Each step of the algorithm involves two linear problems of size $NL_xr$ for the $(A_k)$ and of size $NL_yr$ for the $(B_k)$.
Since both problems are very similar, we only focus on the linear system involving $(A_k)$.
The linear problems that we want to solve will be a block tridiagonal problem with blocks of size $L_xr$ for the $(A_k)$.
This means that we could solve each linear problem by performing a block factorization (LU or Cholesky), but the complexity would be $\bigO{N(rL_x)^3}$, which is costly if $L_x$ is too large.
Instead, we will use the conjugate gradient method to solve each linear problem.

This, however, requires a suitable preconditioner that we describe now. 
Let us define $M_0(B_0, ..., B_N)$ the matrix associated with the quadratic form
\begin{equation}
	F^0(A_0, ..., A_N)
	=	\norm{A_0}^2 + T \norm{\sum_{k=0}^N \zeta_k(t) A_k \transpose{B_k}}_{\Ltwosp{I, \C^{L_x \times L_y}}}^2
\end{equation}
then we can observe that $M_0$ has the form
\begin{equation}
	N_r \otimes I_{\C^{L_x \times L_x}},
\end{equation}
where $N_r$ is block tridiagonal with blocks of size $r \times r$, and only depends on the $B_k$'s.
Therefore, computing the inverse with a block factorization of $M_0$ only costs $\bigO{Nr^3}$, which is reasonable since $r$ is not meant to be large. Additionally, since $M_0$ only depends on the $B_k$'s, we only need to compute the block factorization once for each linear problem that we solve.
Naturally a symmetric precoditionner is used for the optimization with respect to the $B_k$'s. \\
The cost of the gradient descent is as follows: assume the cost of applying $H_x(t, \cdot)$ and $H_y(t, \cdot)$ to a vector is $\bigO{\gamma(\max(L_x, L_y))}$. For instance, if $H_x(t, \cdot)$ and $H_y(t, \cdot)$ are given as full matrices (resp. sparse matrices) we have $\gamma(n) = n^2$ (resp. $\gamma(n) = n$).
The cost of one iteration of the conjugate gradient descent is then $\bigO{NR^2\gamma(\max(L_x, L_y))}$.

\subsection{Numerical results} \label{subsec:Tensor numerical results}

In this section, we compare the numerical results obtained from the ALS algorithm to the most common Dirac-Frenkel principle.
For low-rank matrices, the Dirac-Frenkel method is presented in \cite{koch_dynamical_2007, lubich_projector-splitting_2014}. The precise implementation we used for the tests in this subsection is the second-order symmetric splitting described in \cite[Section 3.3]{lubich_projector-splitting_2014}. 
This integrator is very easy to implement and offers the significant advantage of being stable and robust under over approximation, as it does not involve any matrix inversion.

\subsubsection{A random matrix example}
Here we consider a small matrix example of the form \eqref{equ:Schrodinger matrix}, with $L_x = L_y = 40$ and
\begin{equation}
\begin{split}
	\mathbb H(t, M)
	&=	\chi(t) \eexp^{itH_{0,x}} H_{1,x} \eexp^{-itH_{0,x}} M \eexp^{itH_{0,y}} H_{1,y} \eexp^{-itH_{0,y}}	\\
	&\quad + (1 - \chi(t)) \eexp^{itH_{0,x}} H_{2,x} \eexp^{-itH_{0,x}} M \eexp^{itH_{0,y}} H_{2,y} \eexp^{-itH_{0,y}},
\end{split}
\end{equation}
where
\begin{itemize}
	\item $\chi(t) = \dfrac{1 + \cos(2\pi t)}{2}$;
	\item $H_{0, x}$ (resp. $H_{0, y}$) is a real diagonal matrix of size $L_x \times L_x$ whose coefficients are chosen according to a uniform law on $[0,1]$ (resp. $L_y \times L_y$);
	\item $H_{1, x}$ and $H_{2, x}$ (resp. $H_{1, y}$ and $H_{2, y}$) are real symmetric tridiagonal matrices of size $L_x \times L_x$ whose coefficients are chosen according to a uniform law on $[0,1]$. (resp. $L_y \times L_y$).
\end{itemize}
The initial condition is a rank~$1$ matrix of the form $X_0 \transpose{Y_0}$ with $X_0 \in \R^{L_x}$, $Y_0 \in \R^{L_y}$ two vectors with coefficient chosen randomly following a uniform law on $[0, 1]$.

We compare the accuracy of the space-time method, the Dirac-Frenkel principle, and the best possible approximation of rank $r$ computed from a reference solution (which we obtain through a classical RK4 method) using a truncated SVD.
We solve \eqref{equ:Schrodinger matrix} on the time interval $I  = (0,5)$, with $200$ time steps for both methods.

On \Cref{fig:sv tensor 1}, we plot the evolution of the singular values of the solution with respect to time. At $t=0$, only one singular value is nonzero since the initial condition is a rank~$1$ matrix, and more singular values begin to appear as time evolves.
On \Cref{fig:error tensor 1}, we compare the accuracy of the Dirac-Frenkel approach, and the space-time approach. We computed the error as $\sup_{t \in I} \norm{\tilde U(t) - U(t)}$, where $U$ is the reference solution, and $\tilde U$ is the approximated solution.
It turns out that both methods perform similarly in terms of accuracy and yield nearly optimal approximations compared to the best possible rank-$r$ approximation.

\begin{figure}[h]
	\centering
	\begin{subfigure}[t]{0.49\textwidth}
		\centering
		\includegraphics[width=\textwidth]{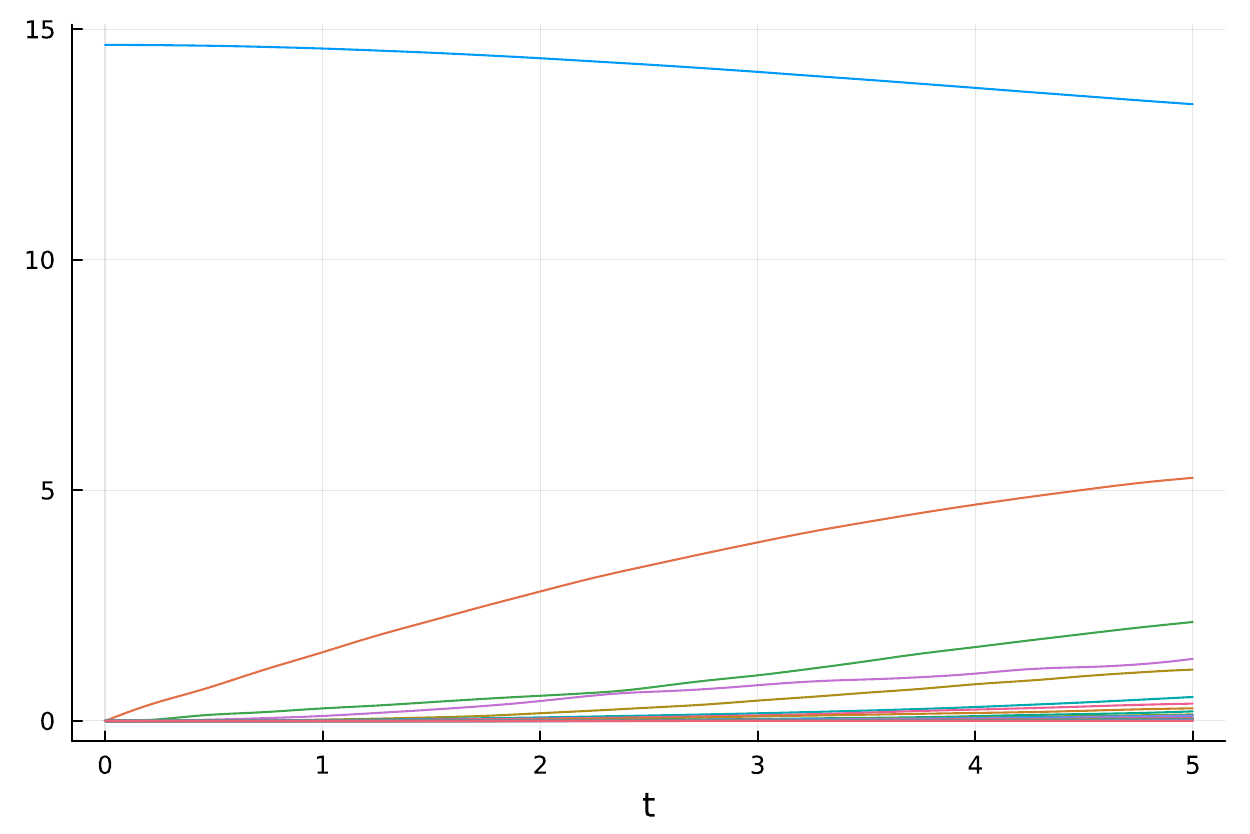}
		\caption{Evolution of the singular values of the solution.}
		\label{fig:sv tensor 1}
	\end{subfigure}
	\hfill
	\begin{subfigure}[t]{0.49\textwidth}
		\centering
		\includegraphics[width=\textwidth]{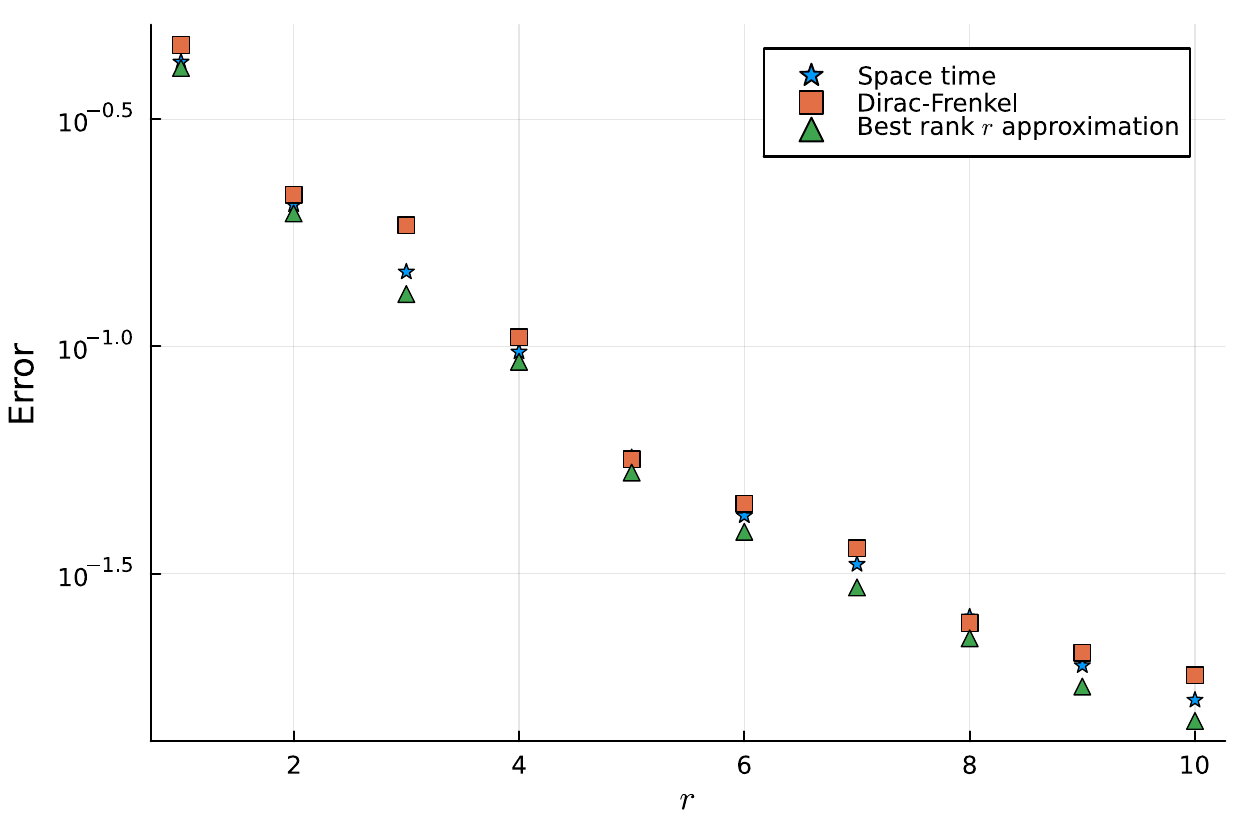}
		\caption{Error calculated as the maximum Frobenius norm of the difference between the computed solution and the reference solution.}
		\label{fig:error tensor 1}
	\end{subfigure}
	\caption{}
\end{figure}

\subsubsection{Pathological case}

We consider here a more challenging example for the Dirac-Frenkel principle.
It is designed such that the right hand side of the equation has a large orthogonal component relative to the tangent space of the approximation manifold.
This scenario presents difficulties for the Dirac-Frenkel principle in achieving an optimal, or near-optimal, approximation for a given fixed rank.
We choose here $L_x = L_y = L = 20$, $N = 200$, and consider the equation
\begin{equation} \label{equ:Pathological equation}
    \begin{cases}
        i\partial_t U(t) = H_x U(t) H_y, \\
        U(0) = U_0,
    \end{cases}
\end{equation}
where $U : I \to \C^{L \times L}$,
\(
    H_x = H_y =
    \begin{bmatrix}
    0 & I_{L/2} \\
    I_{L/2} & 0
    \end{bmatrix},
\)
where $I_n$ denotes the $n \times n$ identity matrix for any $n\in \N^*$, and
\[
    U_0 = {\rm diag}(1, \eexp^{-1}, \eexp^{-2}, ..., \eexp^{-L+1}).
\]
For each approximation rank $r$, we will replace the initial condition by its best approximation of rank $r$:
\[
    U_{0, r} = U_{0, x} \transpose{U_{0, y}},
\]
with
\[
    U_{0, x} =
    \begin{pmatrix} D_r \\ 0 \end{pmatrix}\in \C^{L \times r},
    \quad
    U_{0, y} =
    \begin{pmatrix} I_r \\ 0 \end{pmatrix} \in \C^{L \times r},
\]
where $D_r = {\rm diag}(1, \eexp^{-1}, ..., \eexp^{-r+1})$. \\
This particular design is motivated by the fact that the right hand side of \eqref{equ:Pathological equation} lies exactly within the orthogonal to the tangent space, meaning that the approximation computed by the Dirac-Frenkel principle will simply remain constant.
Since we believe that this comparison would be too unfair for the Dirac-Frenkel principle and is irrelevant in practice, some small random noise is added to the initial condition for the Dirac-Frenkel computations.
More precisely, we change the initial condition to $\tilde U_{0, r} = \tilde U_{0, x} \transpose{\tilde U_{0, y}}$ where
\[
    \tilde U_{0, x} =
    U_{0, x} + \delta U_{0,x} \in \C^{L \times r},
    \quad
    \tilde U_{0, y} =
    U_{0, y} + \delta U_{0,y} \in \C^{L \times r}.
\]
The coefficient of the noise matrices $\delta U_{0, x}$ and $\delta U_{0, y}$ are chosen according to a uniform law on $[0, \varepsilon]$. As shown in \Cref{fig:error tensor 2} with the case $\varepsilon=0$, the error does not decay with this random noise.

In Figure~\ref{fig:Pathological case}, the left panel displays the behavior of the singular values of the exact solution $U(t)$ to~\eqref{equ:Schrodinger matrix}. 
As expected from the design of the operators $H_x, H_y$, there are many singular values crossings in the evolution, rendering difficult the resolution using a Dirac-Frenkel principle.
Nevertheless, the Dirac-Frenkel principle gives results within an order of magnitude, even for very small values of $\varepsilon$.
Note that the approximation given by the global space-time approach is less sensitive than the Dirac-Frenkel principle, giving results close to the optimal solution.
However, this observation must be balanced by comparing the computational cost of the two methods. On \Cref{fig:time tensor 2}, we display the computation time of the two methods with respect to the error obtained. We find that, for a given error tolerance, the Dirac-Frenkel method (while potentially requiring a higher rank) ultimately proves to be more computationally efficient.

\begin{figure}[h]
	\centering
	\begin{subfigure}[t]{0.49\textwidth}
		\centering
		\includegraphics[width=\textwidth]{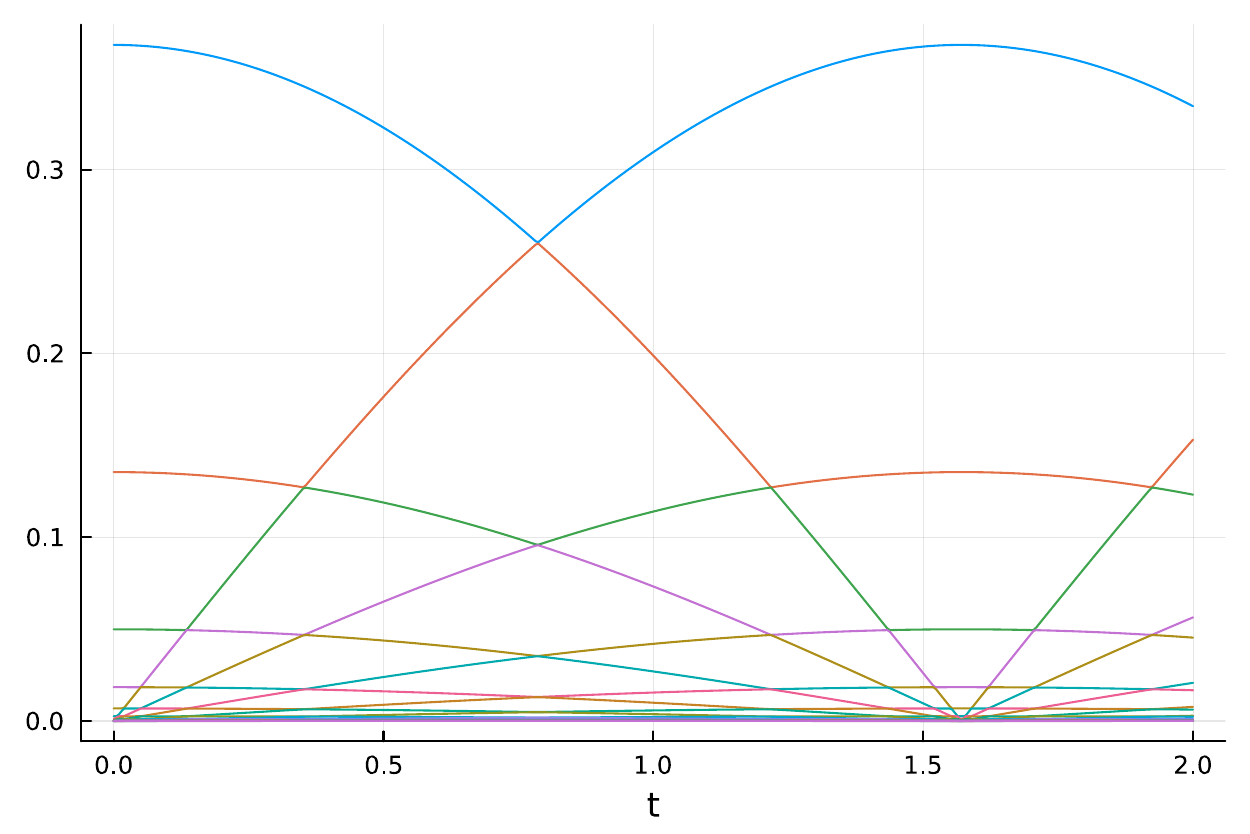}
		\caption{Evolution of the singular values of the solution with respect to time.}
		\label{fig:sv tensor 2}
	\end{subfigure}
	\hfill
	\begin{subfigure}[t]{0.49\textwidth}
		\centering
		\includegraphics[width=\textwidth]{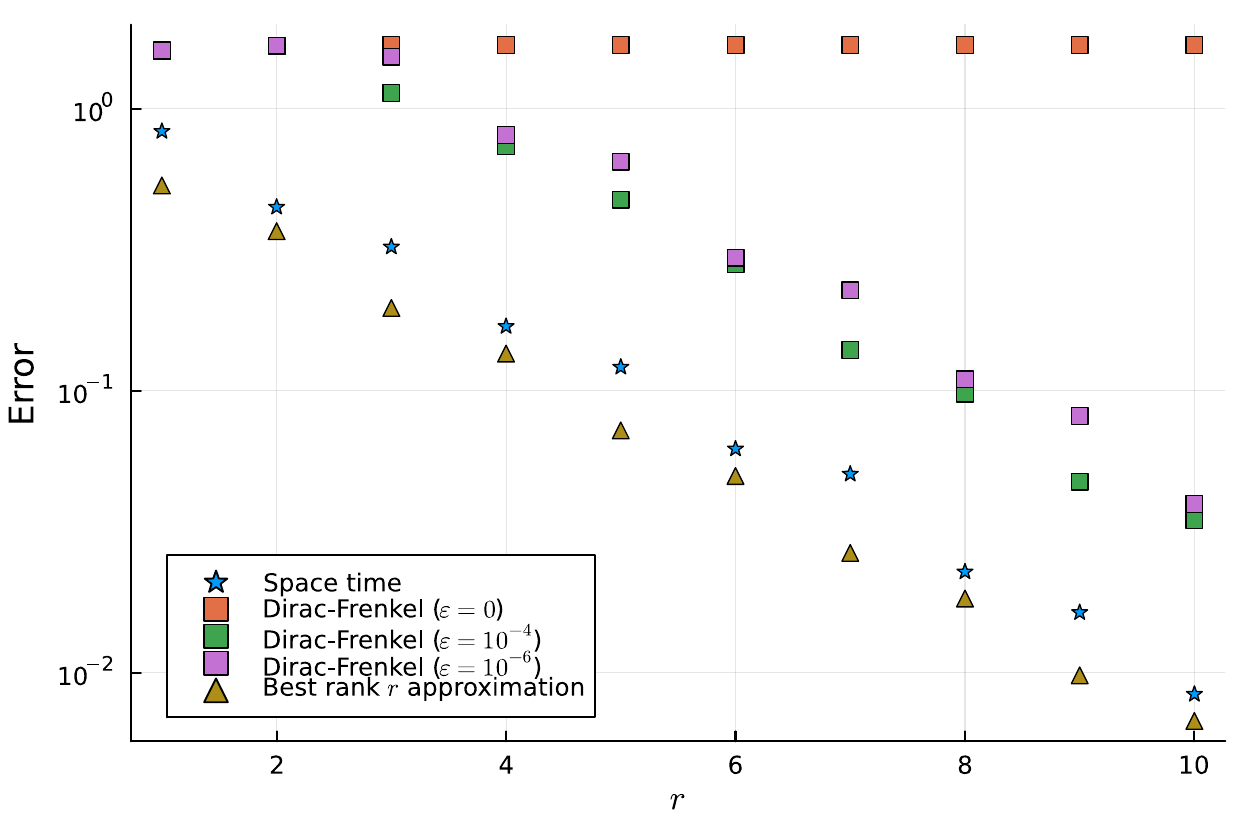}
		\caption{Error calculated as the maximum Frobenius norm of the difference between the computed solution and the reference solution at $t=2$ with respect to the approximation rank.}
		\label{fig:error tensor 2}
	\end{subfigure}

    \begin{subfigure}[t]{0.5\textwidth}
		\centering
		\includegraphics[width=\textwidth]{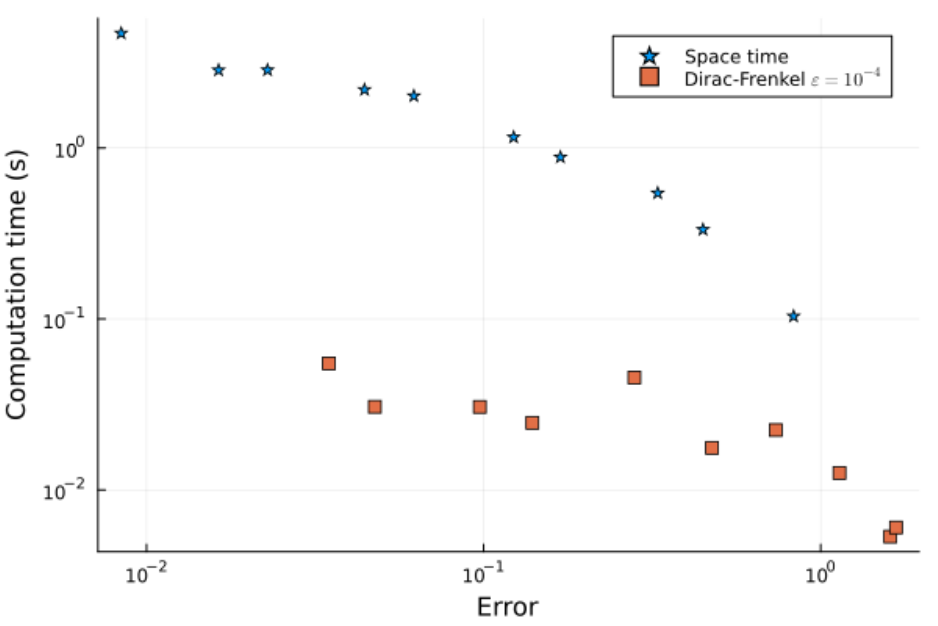}
		\caption{Computation times with respect to error for the different methods.}
		\label{fig:time tensor 2}
	\end{subfigure}
    
	\caption{} \label{fig:Pathological case}
\end{figure}

\section{Gaussian wave packets} \label{sec:Gaussians}
In this section, we introduce an algorithm aimed at approximating the solution of the time-dependent Schrödinger equation as a linear combination of Gaussian wave packets, utilizing a least-squares formulation.
First, let us introduce some notation. For a fixed value of $d \in \N^*$, let $\mathcal S_d$ denote the set of symmetric matrices of $\R^{d\times d}$ and $\mathcal S^{+,*}_d$ denote the set of positive-definite symmetric matrices of $\mathbb{R}^{d \times d}$.

Let us also introduce the following sets:
\begin{equation} \label{equ:gaussian ansatz}
	\mathcal G
	=	\set{g: \mathbb{R}^d \to \mathbb{C}}{ \begin{array}{c}
    \forall x\in \mathbb{R}^d, \; g(x) = a \eexp^{- \frac 1 2 (x-q) \cdot Q(x - q)} \eexp^{i  p \cdot (x-q)}, \\ \mbox{with } a \in \C,\; p, q \in \R^d, \; Q = A + iB \; \mbox{ with }A \in \mathcal S_d^{+,*} \mbox{ and } B \in \mathcal S_d\\
    \end{array}}.
\end{equation}
and, for a given collection of non-negative integer $\nu = (\nu_1, ..., \nu_d) \in \N^d$:

\begin{equation}
	\mathcal G_\nu
	=	\quickset{P g \ : \ P \in \C^{\nu}[x_1, ..., x_d],\;  g \in \mathcal{G}},
\end{equation}
where $\C^\nu[x_1, ..., x_d] = \set{\sum_{\theta_d \leq \nu_1, ..., \theta_d \leq \nu_d} a_\alpha x_1^{\theta_d} ... x_d^{\theta_d}}{(a_\alpha)_\alpha \subset \C}$.
Notice then that $\mathcal G_{(0,\ldots,0)} = \mathcal G$. 


This format is classically used to compute approximations of solutions to the Schrödinger equation \cite{Sawada_Heather_Jackson_Metiu_1985,joubert-doriol_variational_2018, kvaal_no_2023,burkhard2024variational}.
It offers several advantageous features including high flexibility, and a natural and relatively compact parametrization. The gaussian wave packets also naturally appear in the context of semiclassical behavior \cite{lasser_computing_2020}. In practice, the solution is typically sought as a linear combination of such (polynomial) gaussian wave packets, and the parameters are updated following a time-stepping procedure.
The most fundamental approach relies on the Dirac-Frenkel principle with various splitting methods \cite{hairer_geometric_2006}. However, it often happens that the gaussian wave packets begin to overlap each other as time moves on, which results in a very ill-conditioned Gram matrix and ultimately leads to some numerical instability.
Alternative techniques have been developed to avoid this phenomenon \cite{joubert-doriol_variational_2018, kvaal_no_2023}.
In the following sections, we propose another approach based on the global space-time formulation which has the advantage of not suffering from the limitations highlighted above.


The outline of the section is the following:
In \Cref{subsec:Gaussian minimization method} we describe a greedy procedure combining the ansatz $\mathcal G$ with the least squares formulation \eqref{equ:Schrodinger variational}. 
In particular, we discuss about the preconditioned gradient descent needed to solve the minimization problem with gaussian wave packets. 
In Section \ref{subsec:Gaussian numerical result}, we numerically test this procedure on two toy problems: first, a one-dimensional problem, followed by a three-dimensional example.

\subsection{Description of the greedy method} \label{subsec:Gaussian minimization method}

\subsubsection{The discrete minimization problem}

Let us describe the process through which we hope to obtain a good approximation of the solution.
Here, we shall only consider the electronic Schrödinger equation
\begin{equation}
	\begin{cases}
		i\partial_t u(t, x) = - \laplacian u(t, x) + V(t, x) u(t, x), \\
		u(0, x) = u_0(x),
	\end{cases}
\end{equation}
so that we are searching for $v$ solution to the minimization problem
\begin{equation}
	v = \argmin_{w \in \Honesp{I, \Ltwosp{\R^d}}} F(w),
\end{equation}
with
\begin{equation}
	F(w)
	=	\norm{w(0) - u_0}_{\Ltwosp{\R^d}}^2 + T \norm{(i\partial_t - \eexp^{-it\laplacian} V \eexp^{it\laplacian}) w}_{\Ltwosp{I \times \R^d}}^2.
\end{equation}

First, we need to discretize $\Honesp{I, \mathcal G}$. We only consider the case $d=1$, the general case is easily deduced. For $N \geq 2$, let $(\zeta_k)_{k=0,...,N}$ be the uniform P1 hat functions on $[0, T]$, that is, such that
\begin{equation}
	\zeta_k \! \left(l \frac T N \right) = \delta_{kl}.
\end{equation}
We then define the set
\begin{equation}
	W_N
	=	\set{\sum_{k=0}^N \zeta_k(t) g_k(x)}{(g_k)_{k=0,...,N} \subset \mathcal G},
\end{equation}
It is worth noting that, similarly to the tensor case, $W_N$ is not a subset of $\Honesp{I, \mathcal G}$. Since there is no practical reason to enforce this constraint, we choose to work with $W_N$ nevertheless because it makes the implementation easier. \\
In the discrete version of the problem, for a given element $w \in W_N$, we will also replace $\eexp^{-it\laplacian} V \eexp^{it\laplacian}w$ by its discrete version acting on $W_N$:
\[
	\sum_{k=0}^N \zeta_k(t) \eexp^{-i \frac{kT}{N} \laplacian} V \eexp^{i \frac{kT}{N} t\laplacian} g_k.
\]
Unless we are dealing with the trivial case where $V$ is constant, there is no reason why $v$ should be well approximated by a single element of $W_N$. One could try to search for the solution as a sum of elements of $W_N$, and optimize all the parameters at once, but the elements of the sum could potentially overlap which would lead to an ill-conditioned problem.
Moreover, although the block structure described below would still be present, it would involve larger blocks, leading to a much more expensive computation.
For all these reasons, we choose to compute the terms one by one, iteratively, using a greedy algorithm such as \Cref{alg:gaussian greedy algo}.
\begin{algorithm}[h!]
	\caption{Greedy Algorithm} \label{alg:gaussian greedy algo}
	\begin{algorithmic}
		\STATE Initialize \( r \gets 1 \).
		\STATE \%The stopping criteria can be defined based on a condition on the residual or by computing a fixed number of terms.
		\WHILE{Stopping criteria not satisfied}
		\STATE Compute \( G_r \gets \argmin_{G \in W_N} F(\sum_{j=1}^{r-1} G_j + G) \).
		\STATE Update \( r \gets r+1 \).
		\ENDWHILE
	\end{algorithmic}
\end{algorithm}

\subsubsection{Convergence of the greedy algorithm}

The convergence of the greedy algorithm requires to introduce the notion of dictionaries.
\begin{definition}
	Let $E$ be a complex Banach space. A subset $\Sigma \subset E$ is called a \textit{dictionary} if it satisfies the following properties:
	\begin{itemize}
		\item[i)]
		$
			\forall x \in \Sigma, \quad \forall \lambda \in \C, \quad
			\lambda x \in \Sigma
		$
		\item[ii)] $\Sigma$ is weakly closed in $E$
		\item[iii)] $\setspan \Sigma$ is dense in $E$.
	\end{itemize}
\end{definition}

Greedy algorithms have been studied for different framework in \cite{cances_convergence_2011,falco2012proper, devore_remarks_1996,ammar_convergence_2010}. The main result of interest to us is the following, the proof can be found in \cite{ammar_convergence_2010} in the case of rank~$1$ tensors but can be directly adapted to more general dictionaries:
\begin{proposition} \label{prop:Greedy algorithm convergence}
    Let $F$ be a quadratic strongly convex functional, and $\Sigma$ be a dictionary.
    Then the sequence $\left( \sum_{j=1}^r G_j \right)_{r \in \N}$ defined by \Cref{alg:gaussian greedy algo} converges in $\Honesp{I, \Ltwo}$ to $\argmin_{w \in \Honesp{I, \Ltwo}} F(w)$.
\end{proposition}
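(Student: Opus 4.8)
\section*{Proof proposal}

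The plan is to reduce the statement to the convergence of a \emph{pure greedy algorithm} in a Hilbert space and then to invoke the three dictionary properties together with strong convexity. Since $F$ is a quadratic strongly convex functional on the Hilbert space $E := \Honesp{I, \Ltwo}$, I would first polarize it: there exist a continuous, Hermitian, coercive sesquilinear form $a$ and a bounded linear form $\ell$ with $F(w) = a(w,w) - 2\operatorname{Re}\ell(w) + c$. Coercivity (strong convexity) makes $\dotprodbracket{\cdot}{\cdot}_a := a(\cdot,\cdot)$ an inner product whose norm $\norm{\cdot}_a$ is equivalent to the $\Honesp{I,\Ltwo}$ norm, so it suffices to prove convergence for $\norm{\cdot}_a$. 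The unique minimizer $u^\star = \argmin_{w \in E} F(w)$ then satisfies $F(w) - F(u^\star) = \norm{w - u^\star}_a^2$, and by density of $\setspan \Sigma$ (property iii) one has $\setclosure{\setspan \Sigma} = E \ni u^\star$, so the greedy iterates, which live in $\setspan \Sigma$, genuinely target $u^\star$. I would also record that each greedy step is well posed: minimizing $G \mapsto \norm{u_{r-1} + G - u^\star}_a^2$ over $\Sigma$ admits a minimizer because a minimizing sequence is $\norm{\cdot}_a$-bounded, $\Sigma$ is weakly closed (property ii), and $\norm{\cdot}_a^2$ is weakly lower semicontinuous.

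Next I would establish the elementary monotonicity and orthogonality estimates. Writing $u_0 = 0$, $u_r = \sum_{j=1}^r G_j$ and $z_r = u_r - u^\star$, the inclusion $0 \in \Sigma$ (property i with $\lambda = 0$) gives $F(u_r) \le F(u_{r-1})$. Property (i) also ensures that the full complex ray $\{tG : t \in \C\}$ lies in $\Sigma$ for each $G \in \Sigma$, so optimality of $G_r$ along its own ray yields $\dotprodbracket{z_r}{G_r}_a = 0$ and hence the exact identity $\norm{z_r}_a^2 = \norm{z_{r-1}}_a^2 - \norm{G_r}_a^2$. Consequently $(\norm{z_r}_a)_r$ decreases to some $\ell \ge 0$ and $\sum_r \norm{G_r}_a^2 = \norm{u^\star}_a^2 - \ell^2 < +\infty$, so $\norm{G_r}_a \to 0$. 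Minimizing $t \mapsto \norm{z_{r-1} + tG}_a^2$ over $t \in \C$ for each unit $G \in \Sigma$ and comparing with the actual decrease shows moreover that the correlation $\sigma_r := \sup\{ \, \abs{\dotprodbracket{z_r}{G}_a} : G \in \Sigma, \ \norm{G}_a = 1 \, \}$ equals $\norm{G_{r+1}}_a$, hence $\sigma_r \to 0$.

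With these in hand I would pass to the limit. The sequence $(z_r)$ is bounded, so it has weak limit points; if $z_{r_k} \rightharpoonup h$, then for every $G \in \Sigma$ one has $\abs{\dotprodbracket{z_{r_k}}{G}_a} \le \sigma_{r_k} \norm{G}_a \to 0$, so $\dotprodbracket{h}{G}_a = 0$, and using the phase invariance from property (i) this holds for the full inner product. Thus $h \perp_a \setspan \Sigma$, and density (property iii) forces $h = 0$; since every weak limit point vanishes and $(z_r)$ is bounded, $z_r \rightharpoonup 0$, i.e.\ $u_r \rightharpoonup u^\star$.

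The hard part will be upgrading this weak convergence to $\ell = 0$, i.e.\ to strong convergence. Weak convergence of $u_r$ to $u^\star$ with weak lower semicontinuity of $F$ only delivers $F(u^\star) \le \lim_r F(u_r)$, which is already known, and the obvious estimates stall because the increments are controlled only in $\ell^2$ (one knows $\sum_r \norm{G_r}_a^2 < \infty$, not $\sum_r \norm{G_r}_a < \infty$), so the one-line bounds do not close. This is precisely the technical core treated in \cite{ammar_convergence_2010} for rank-one dictionaries, and I expect the argument to transfer \emph{verbatim} to a general dictionary: approximate $u^\star$ to accuracy $\eta$ by a finite combination $\sum_i c_i \phi_i$ with $\phi_i \in \Sigma$ (property iii), then combine the exact energy-decrease identity $\norm{z_{r-1}}_a^2 - \norm{z_r}_a^2 = \sigma_{r-1}^2$ with the correlation decay $\sigma_r \to 0$ to show that the monotone residual energy cannot remain bounded away from zero, forcing $\ell = 0$. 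Once $\norm{z_r}_a \to 0$ is secured, the equivalence of $\norm{\cdot}_a$ with the $\Honesp{I,\Ltwo}$ norm yields $u_r \to u^\star$ in $\Honesp{I,\Ltwo}$, which is the assertion.
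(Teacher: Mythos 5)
Your proposal is correct and takes essentially the same route as the paper, whose own ``proof'' consists of citing \cite{ammar_convergence_2010} and asserting that the rank-one argument adapts directly to general dictionaries: your energy-norm reduction, the ray-optimality identities $\dotprodbracket{z_r}{G_r}_a = 0$, $\norm{z_r}_a^2 = \norm{z_{r-1}}_a^2 - \norm{G_r}_a^2$, $\sigma_r = \norm{G_{r+1}}_a$, the use of weak closedness for well-posedness of each step, and the deferral of strong convergence to that reference are exactly its ingredients. For completeness, the step you defer closes in a few lines rather than requiring a Jones-type argument: since $\sum_r \norm{G_r}_a^2 < \infty$ one has $\liminf_{r \to \infty} \sqrt{r}\, \norm{G_{r+1}}_a = 0$, while $\norm{z_r}_a^2 = \dotprodbracket{z_r}{u_r - u^\star}_a \leq \sigma_r \sum_{j=1}^{r} \norm{G_j}_a + \abs{\dotprodbracket{z_r}{u^\star}_a} \leq \sqrt{r}\, \sigma_r \bigl( \sum_{j} \norm{G_j}_a^2 \bigr)^{1/2} + o(1)$ by Cauchy--Schwarz and your weak convergence $z_r \rightharpoonup 0$, so $\norm{z_{r_k}}_a \to 0$ along the subsequence realizing the liminf, and monotonicity of $\norm{z_r}_a$ forces $\ell = 0$.
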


The fact that \Cref{prop:Greedy algorithm convergence} can be applied here is simply a consequence of the following result:
\begin{proposition} \label{prop:Gaussian dictionary}
    Let $d \geq 1$.
    For any $\nu = (\nu_1, ..., \nu_d) \in \N^d$, $\mathcal G_\nu$ and $1 \leq r < \infty$, $\mathcal G_\nu$ is a dictionary in $\Lspacesp{r}{\R^d}$.
    Consequently, $\Honesp{I, \mathcal G_\nu}$ is also a dictionary in $\Honesp{I, \Ltwosp{\R^d}}$.
\end{proposition}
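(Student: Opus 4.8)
The statement has three properties to verify for each $\mathcal G_\nu$ as a subset of $\Lspacesp{r}{\R^d}$, namely the dictionary axioms (i)--(iii), and then a lifting step to $\Honesp{I, \mathcal G_\nu}$. The plan is to check (i) and (iii) quickly and to concentrate the real effort on (ii), the weak closedness, which I expect to be the main obstacle.

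First I would dispatch the \emph{scaling invariance} (i): an element of $\mathcal G_\nu$ has the form $Pg$ with $P \in \C^\nu[x_1,\dots,x_d]$ and $g \in \mathcal G$, and multiplying by $\lambda \in \C$ amounts to replacing the scalar prefactor $a$ in $g$ by $\lambda a$ (or equivalently scaling $P$), which stays in $\mathcal G_\nu$ since $a$ ranges over all of $\C$. Next I would treat \emph{density of the span} (iii). Here the cleanest route is to show $\setspan \mathcal G_\nu \supset \setspan \mathcal G$ (take $P \equiv 1$) and to argue that finite linear combinations of Gaussians are already dense in $\Lspacesp{r}{\R^d}$ for $1 \le r < \infty$. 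A standard way to see this is that the span of Gaussians contains, or can approximate, a total family: for instance one can recover translates of a fixed Gaussian and invoke the fact that translates of a single non-degenerate Gaussian are dense in $\Lspace r$ (the Gaussian has nonvanishing Fourier transform, so Wiener's Tauberian theorem applies for $r=1$, and a duality/interpolation or direct mollification argument handles general $r$). Alternatively, convolution against a narrow Gaussian approximates the identity, so that Gaussian combinations approximate any $\Ccinfset$ function, which is dense; I would use whichever is shortest to state.

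The heart of the proof is \emph{weak closedness} (ii). The plan is to take a sequence $f_n = P_n g_n \in \mathcal G_\nu$ with $f_n \rightharpoonup f$ weakly in $\Lspace r$ and show $f \in \mathcal G_\nu$. Weakly convergent sequences are norm-bounded, so $\sup_n \norm{f_n}_{\Lspace r} < \infty$. The key point is that the Gaussian-parameter space is essentially finite dimensional, so I would try to extract convergence of the parameters $(a_n, q_n, p_n, A_n, B_n)$ and of the polynomial coefficients of $P_n$, then pass to the limit \emph{pointwise} and identify the limit as an element $Pg \in \mathcal G_\nu$; a.e.\ pointwise limits of a weakly convergent bounded sequence must agree with the weak limit $f$. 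The delicate compactness issues are precisely where degeneration can occur: (a) the real part $A_n$ of the width matrix could blow up (the Gaussian collapses to a point mass, which escapes $\Lspace r$ unless the amplitude vanishes) or degenerate toward the boundary of $\mathcal S_d^{+,*}$ (the Gaussian spreads out); (b) the centers $q_n$ or momenta $p_n$ could run off to infinity; (c) the amplitude $a_n$ could vanish. I would argue that the norm bound, together with a lower bound on the $\Lspace r$ mass forced by weak convergence to a nonzero limit, prevents the bad scenarios: the $\Lspace r$ norm of $Pg$ is an explicit elementary function of the parameters, so boundedness above and away from zero pins $A_n$ into a compact subset of $\mathcal S_d^{+,*}$, bounds $q_n, p_n, B_n$, and keeps $a_n$ bounded and its modulus bounded below. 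One must also handle the edge case $f = 0$ separately (the zero function is in $\mathcal G_\nu$ via $a = 0$), so only the case $f \neq 0$ needs the quantitative lower bound. I expect the genuine technical work to be this parameter-compactness argument, especially keeping track of the polynomial factor $P_n$, whose degree is capped by $\nu$ but whose coefficients must be controlled jointly with the Gaussian width; a convenient device is to normalize $g_n$ and absorb scalar factors, reducing to a fixed normalization and using the explicit formula for moments of Gaussians to read off all parameter bounds.

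Finally, for the lifting to $\Honesp{I, \mathcal G_\nu}$, I would invoke the same mechanism as in Proposition~\ref{prop: Weak closedness}: axioms (i) and (iii) transfer directly (pointwise scaling, and density of the span since $\Honesp{I, \cdot}$-valued combinations of space-only dictionary elements against a time basis are dense), and weak closedness of $\Honesp{I, \mathcal G_\nu}$ follows from the weak closedness of $\mathcal G_\nu$ in $\Ltwosp{\R^d}$ exactly as in Proposition~\ref{prop: Weak closedness}, using that the evaluation maps $T_t$ are continuous from $\Honesp{I, \Ltwo}$ into $\Ltwo$ and hence weakly continuous, so that $\widetilde w(t) \in \mathcal G_\nu$ for a.e.\ $t$ passes to the weak limit. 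The only caveat is that weak closedness in $\Ltwo$ is needed (the case $r = 2$ of part (ii)), which is why the general-$r$ statement is proved first and then specialized.
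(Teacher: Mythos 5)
Your overall architecture is sound and parallels the paper: dispatch scaling, prove density of the span by a mollification/duality argument (the paper's \Cref{prop:Gaussian density} does exactly this: an annihilator $u \in \Lspacesp{r'}{\R^d}$ satisfies $g_\varepsilon \ast u = 0$, and letting $\varepsilon \to 0$ forces $u=0$), treat $f=0$ separately, and lift to $\Honesp{I, \mathcal G_\nu}$ via the weakly continuous evaluation maps of \Cref{prop: Weak closedness}. The gap lies in the mechanism you propose for the parameter-compactness at the heart of the weak closedness. You claim that since the $\Lspace r$ norm of $Pg$ is an explicit function of the parameters, a two-sided norm bound ``pins $A_n$ into a compact subset of $\mathcal S_d^{+,*}$, bounds $q_n, p_n, B_n$, and keeps $a_n$ bounded.'' This is false for three of the five parameter groups: the modulus $\abs{P(x)\, \eexp^{-\frac12 (x-q)\cdot Q(x-q)} \eexp^{ip\cdot(x-q)}} = \abs{P(x)}\, \eexp^{-\frac12 (x-q)\cdot A(x-q)}$ is independent of $p$ and of $B = \Im Q$, and is a translate in $q$ (after recentering $P$), so the $\Lspace r$ norm carries no information whatsoever about $p_n$, $B_n$, $q_n$. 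Even for $A_n$ the norm alone does not suffice: a spreading sequence ($\lambda_{\min}(A_n) \to 0$) or a concentrating one ($\lambda_{\max}(A_n) \to \infty$) with the amplitude rescaled as $\abs{a_n} \sim \det(A_n)^{1/(2r)}$ keeps $\norm{g_n}_{\Lspace r}$ constant while the weak limit vanishes (or, for $r=1$, can even be a Dirac mass in $\Distribset$, which is why the paper adds an $\Lone_{loc}$ hypothesis in its appendix remark).

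The correct mechanism, which the paper implements in Steps 2--6 of \Cref{prop:General gaussian closedness}, is duality against \emph{fixed compactly supported test functions} combined with the assumption $g \neq 0$: escape $\abs{q_k} \to \infty$ and spreading/concentration of $A_k$ kill $\dualitybracket{g_k}{\varphi}$ by a change of variables; $\abs{p_k} \to \infty$ is excluded by integration by parts (a Riemann--Lebesgue argument); and, crucially, $\norm{B_k} \to \infty$ requires a genuine stationary-phase input --- the paper proves a Fresnel-type estimate (\Cref{lem:Quadratic phase estimate}), giving $\abs{\int \eexp^{\frac{i\omega}{2}x\cdot Ax} u} \leq C\omega^{-d/2}\sup_{\abs\alpha \leq d+1}\norm{\partial^\alpha u}_{\Lspace 1}$, and applies it in the distinguished eigendirection of $B_k$ after a Fubini reduction. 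This oscillatory-integral step is the technical core of the appendix and has no counterpart in your plan; without it (or an equivalent, e.g.\ an explicit computation of the Fourier transform of a complex Gaussian showing decay in $\norm{B_k}$), the imaginary-width blow-up cannot be excluded and the weak-closedness argument is incomplete.
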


\begin{proof}
    The fact that $\mathcal G_\nu$ is a dictionary in $\Lspace r$ is proved in \Cref{app:Proof of weak closedness}.
    The second part of the proposition follows from \Cref{prop: Weak closedness}.
\end{proof}

\begin{remark} \label{rem:Sum of gaussians not dictionary}
	It is informative to notice that the set $\mathcal G + \mathcal G$ of functions which read as a linear combination of gaussian wave packets lacks the property of weak closedness and thus does not qualify as a dictionary.
	For instance, it can be checked that for any $1 \leq r < \infty$,
	\[
		\frac{g_h -g}{h} \xrightharpoonup[h \to 0]{\Lspace r} w,
	\]
    where for all $x\in \mathbb{R}^d$, $g_h(x) = \eexp^{-(x+h)^2}$, $g(x) = \eexp^{-x^2}$ and $w(x) = -2x \eexp^{-x^2}$ so that $w\notin \mathcal G$. Similary, the set of functions which read as a linear combination of $K$ gaussian wave packets is in general not closed as soon as $K\geq 2$.
\end{remark}

\subsubsection{Description of one greedy step}

We describe here the minimization process involved for one step of the greedy algorithm.
First, let us fix a proper parametrization for our problem.
There is a natural parametrization
\begin{equation}
	\fctlongdef{\gamma}{\R^6}{\Ltwosp{\R}}{(v_0, ..., v_5)}{(v_0+iv_1)\eexp^{-\frac{(v_2+iv_3)}{2} (x - v_4)} \eexp^{iv_5x}},
\end{equation}
which maps a vector $\boldsymbol v = (v_0, ..., v_5) \in \R^6$ to an element of $\mathcal G$ (again, here, we fixed $d=1$).
Then one can then parametrize $W_N$ as follows:
\begin{equation}
    \fctshortdef{\Gamma}{X = (x_0, ..., x_{6(N+1)-1}) \in \R^{6(N+1)}}{\sum_{k=0}^N \zeta_k(t) \gamma(X_{6k:6k+5})} \in W_N,
\end{equation}
where for $i \leq j$ we define $X_{i:j} = (x_i, x_{i+1}, ..., x_{j-1}, x_j)$.
The discretization of \eqref{equ:Schrodinger variational} gives
\begin{equation} \label{equ: Discrete Variational formulation}
    \text{Find } X_* \in
    \argmin_{X \in \R^{6(N+1)}} F(X),
\end{equation}
where
\begin{equation}
    F(X)
    =   \norm{g_0 - u_0}_{\Ltwosp{\R}}^2 + T \norm{\sum_{k=0}^N \left( i\partial_t - H\!\left( l \frac T N \right) \right) \zeta_k(t) \gamma(X_{6k:6k+5}) - f}_{\Ltwosp{I \times \R}}^2.
\end{equation}
If $f(t)$ and $u_0$ can be written as a sum of polynomial-gaussians, $F(X)$ has an analytic expression, and its gradient can be computed analytically as well (in \cite{guillot_clguillotschrodingergaussian_2025} we compute the gradient with Julia's ForwardDiff package \cite{RevelsLubinPapamarkou2016}). It is therefore tempting to use a gradient descent to solve the discrete minimization problem. However, the convergence turns out to be remarkably slow, due to the ill-conditioning of the parametrization, that is, the natural Euclidean metric on $\R^{6(N+1)}$ fails to reproduce the metric associated with $F$ on $W_N$.
In principle, this problem could be solved by replacing the gradient descent by a Newton method. Since each $\zeta_k$ only "interacts" with $\zeta_{k-1}$ and $\zeta_{k+1}$, the hessian of $F$ with respect to $X$ is in fact block-tridiagonal with blocks of size $6 \times 6$. This makes the Newton method computationally affordable. However, computing the blocks of the hessian of $F$, even by taking advantage of the sparsity, turns out to be quite expensive (much more than the gradient). Moreover, as the greedy algorithm moves on, each previous term is added to the right member $f$, and the cost of computing the hessian increases even more.

A possible way to fix this issue is to rely on an idea similar to the "natural gradient descent" method \cite{schollwock_density-matrix_2011} which has gained a lot of popularity in the field of scientific machine learning recently. We define the function
\begin{equation}
    G(X_1, X_2)
    =   \dotprodbracket{\Gamma(X_1)(0)}{\Gamma(X_2)(0)}_{\Ltwosp \R} + T \dotprodbracket{i\partial_t \Gamma(X_1)}{i\partial_t \Gamma(X_2)}_{\Ltwo_t \Ltwosp \R}.
\end{equation}
For a given $X \in \R^{6N}$, we compute the matrix
\[
	\tilde H(X) = \partial_{X_1} \partial_{X_2} G(X, X).
\]
This matrix represents a metric equivalent to the functional $F$ projected on the tangent space of $\mathcal V$ parametrized by $\R^{6N}$, and can be used as a preconditioner for the gradient descent.
There are several benefits compared to the Newton method:
\begin{itemize}
	\item $\tilde H(X)$ is guaranteed to be nonnegative.
	\item Computing $\tilde H(X)$ is cheaper than computing the hessian of $F$. Indeed, it shares the same property of being block tridiagonal, but the underlying function is much more simple (in particular, the cost is the same for every step of the greedy algorithm).
\end{itemize}
Now we use \Cref{alg:one gaussian opti} for the one-gaussian optimization.

\begin{algorithm}[h!]
	\caption{Optimization Algorithm} \label{alg:one gaussian opti}
	\begin{algorithmic}
		\STATE Initialize \( X \).
		\STATE Set \( \varepsilon \gets \infty \).
		\WHILE{\( \varepsilon > \varepsilon_{\text{lim}} \)}
		\STATE Compute \( Y \gets \tilde{H}(X)^{-1} \nabla F(X) \).
		\STATE Find $\alpha_{\mathrm{opt}}$ such that  \(F(X - \alpha_{\mathrm{opt}} Y) = \min_{\alpha \in \R} F(X-\alpha Y) \).
		\STATE Update \( X \gets X - \alpha_{\mathrm{opt}} Y \).
        \STATE Update $\varepsilon = Y \cdot \nabla F(X)$
		\ENDWHILE
	\end{algorithmic}
\end{algorithm}
Taking advantage of sparsity, the cost of computing the gradient is $\bigO{N n_f}$ where $n_f$ is the number of terms in $f(t)$.
Computing $\tilde H(X)$ costs around $\bigO{N}$ operations, and the inverse is computed with a block Cholesky factorization ($\tilde H(X)$ is block-tridiagonal) which costs about the same.
One has to keep in mind that the constants of the $\mathcal O$ also depend on the number of parameters used for the wave packets. Specifically, as this number increases—such as in higher-dimensional problems—it may become advantageous to employ a different method for computing derivatives (e.g., reverse differentiation) in order to reduce the asymptotic cost.

\subsection{Numerical results} \label{subsec:Gaussian numerical result}

The code of the results in this section can be found in \cite{guillot_clguillothermitewavepackets_2025, guillot_clguillotschrodingergaussian_2025}.

\subsubsection{Reflection of a wave packet on a potential barrier} \label{subsec:Numerical 1D wave packet}
In this numerical example, we study the scattering of a gaussian wave packet
\begin{equation}
	g_0(x)
	=	\eexp^{-\frac 1 2 (x - q)^2} \eexp^{ipx},
\end{equation}
which corresponds ``semi-classically'' to a particle initially located in $q$ with a velocity $p$, moving toward a ``double hump'' potential barrier
\begin{equation}
	V(x)
	=	1.5 \eexp^{-\frac{(x+2)^2}{2}} + \eexp^{-\frac{(x-2)^2}{2}}.
\end{equation}
We solve the modified dynamics
\[
\begin{cases}
	i\partial_t \varphi = \eexp^{-it\laplacian} V \eexp^{it\laplacian} \varphi, \\
	\varphi(0) = g_0.
\end{cases}
\]
Of course, we easily recover the real solution $\psi(t) = \eexp^{it\laplacian} \varphi(t)$.

We apply the procedure described in \Cref{subsec:Gaussian minimization method}, with the following parameters:
\begin{equation}
	q=6, \quad p=-1, \quad T=5, \quad N=100.
\end{equation}
This choice of parameters is motivated by the observation of partial tunneling effects and ensures that the time frame captures a significant part of the interaction between the wave packet and the barrier. The wave packet is expected to undergo partial reflection and partial transmission upon encountering the barrier.

On \Cref{fig:wave evolution}, we plot the density $\abs \psi^2$ associated to the solution at different times $t$. On \Cref{fig:norm evolution}, we display and the evolution of the $\Ltwo$ norm of the computed solution. As expected, the norm is not preserved but is close to the exact norm.
On \Cref{fig:residual evolution}, we plot the decay of the residual $F$ with respect to the number of terms. We observe a slow but steady decay, typical for greedy algorithms.
\begin{figure}[h]
	\centering
	\begin{subfigure}[t]{0.49\textwidth}
		\centering
		\includegraphics[width=\textwidth]{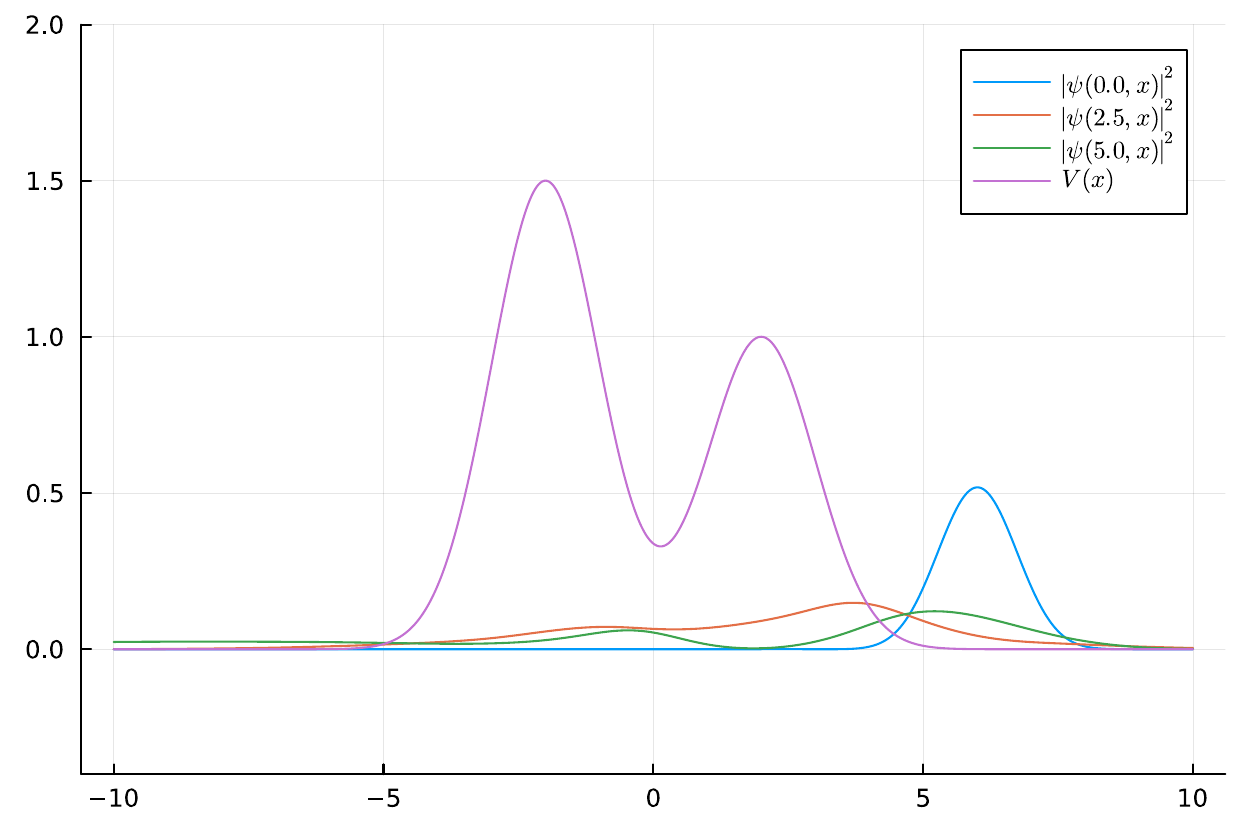}
		\caption{Representation of the square modulus of the wave function $\psi$ for different values of $t$.}
		\label{fig:wave evolution}
	\end{subfigure}
	\hfill
	\begin{subfigure}[t]{0.49\textwidth}
		\centering
		\includegraphics[width=\textwidth]{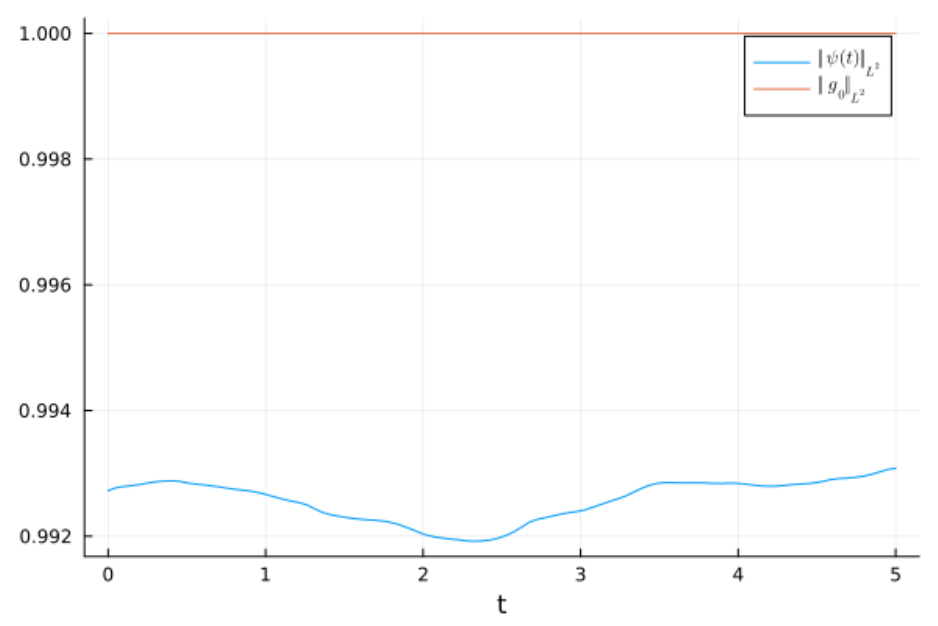}
		\caption{Evolution of the $L^2$ norm of the solution computed by the orthogonal greedy algorithm with $30$ terms.}
		\label{fig:norm evolution}
	\end{subfigure}	
	\begin{subfigure}[t]{0.5\textwidth}
		\centering
		\includegraphics[width=\textwidth]{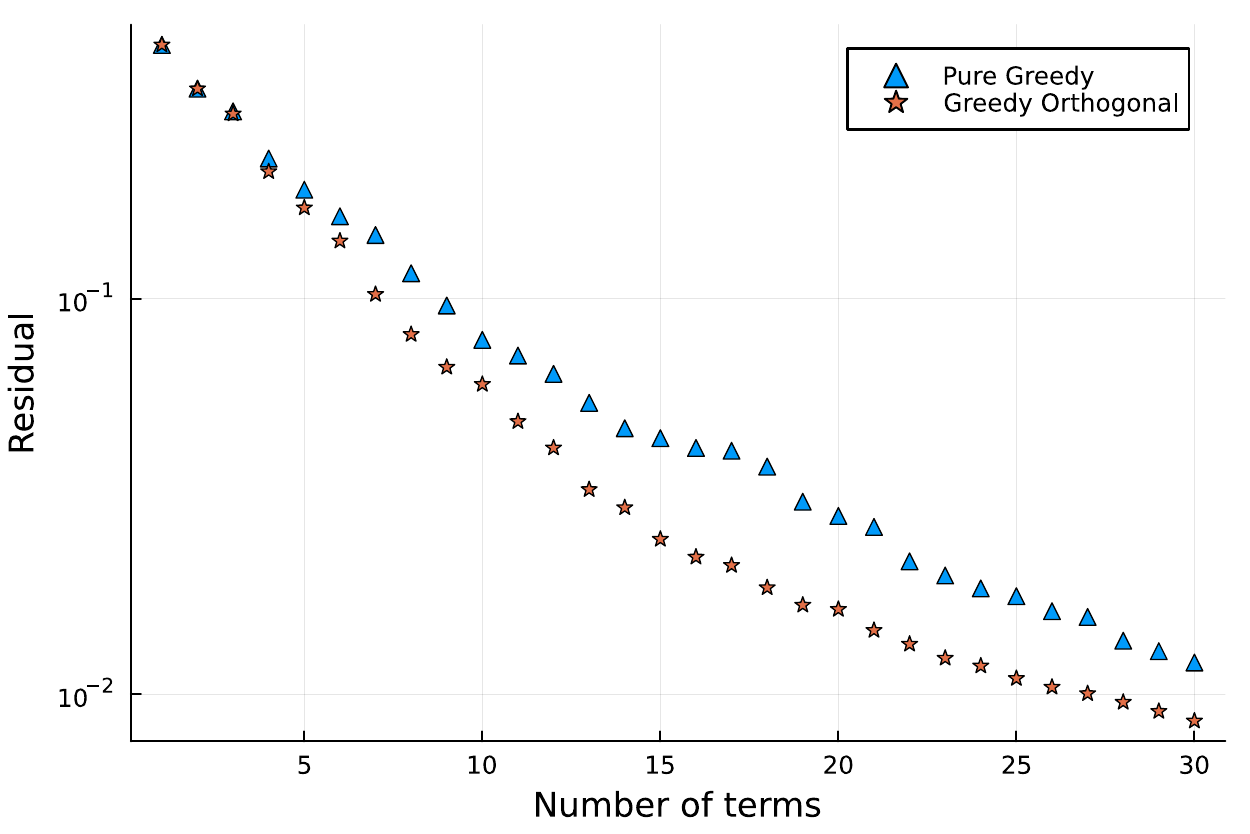}
		\caption{Decay of the residual with respect to the number of terms computed by the greedy algorithm.}
		\label{fig:residual evolution}
	\end{subfigure}
	\caption{} \label{fig:Gaussian error}
\end{figure}



\subsubsection{Scattering of a 3D wave packet} \label{subsec:Numerical 3D wave packet}

Although the one dimensional example studied in \Cref{subsec:Numerical 1D wave packet} is instructive, it offers very limited computational benefits, even when compared to a ``naive'' approach.
Indeed, in one dimension, considering the problem over a large space interval and relying on a classical discretization method (\textit{e.g.} finite elements or spectral methods) combined with a time steping scheme will yield a satisfying result very quickly.
The aim of this section is to demonstrate that the low-complexity approach can provide computational acceleration in three dimensions compared to classical methods.

Let us consider a 3D gaussian wave packet
\begin{equation}
    g_0(x) = \eexp^{-\frac 1 2 \abs{x - q}^2} \eexp^{ipx},
\end{equation}
with
\[
    q = \vecthree{3}{3}{0}, \quad
    p = \vecthree{\frac{-1}{\sqrt 2}}{\frac{-1}{\sqrt 2}}{0},
\]
and a repulsive potential barrier
\[
    V(x) = \eexp^{-\frac{\abs x^2}{2}}.
\]
We compute a reference solution using a classical Strang splitting scheme, given by the approximation $\eexp^{ih(\laplacian-V)} \approx \eexp^{i \frac h 2 \laplacian} \eexp^{-ihV} \eexp^{i \frac h 2 \laplacian}$.
The problem is discretized on a fixed domain $(-30,30)^3$ with Dirichlet discretization applied. A spectral basis is constructed using tensor products of rescaled $\sin(k\pi x)$ functions. We employ $128$ basis elements in each direction ($128^3$ elements in total).

On \Cref{fig:error evolution 3d}, we plot the evolution of the $\Ltwo$ norm of the error for a classical spectral method and the greedy method developped in this section, and on \Cref{fig:residual evolution3D} we plot the evolution of the residual $F$ with respect to the number of terms of the greedy algorithm. As in \Cref{subsec:Numerical 1D wave packet}, we observe a slow but steady decay for the greedy algorithm.

In \Cref{tab:computation times spectral} and \Cref{tab:computation times greedy}, we compare the computational costs of the methods. We observe that, even in three dimensions, the variational approach becomes competitive when compared to moderately fine discretizations. Moreover, the cost of the classical spectral methods rises significantly with finer discretization, while the cost of the greedy algorithm scales more moderately. This effect is expected to become even more pronounced in higher dimension as the cost of classical methods become prohibitive.
It is worth noting that both methods are eligible for parallelization, resulting in a much shorter real computation time.

\begin{figure}[h] \label{fig:Gaussian3D simulation}
	\centering
	\begin{subfigure}[t]{0.49\textwidth}
		\centering
		\includegraphics[width=\textwidth]{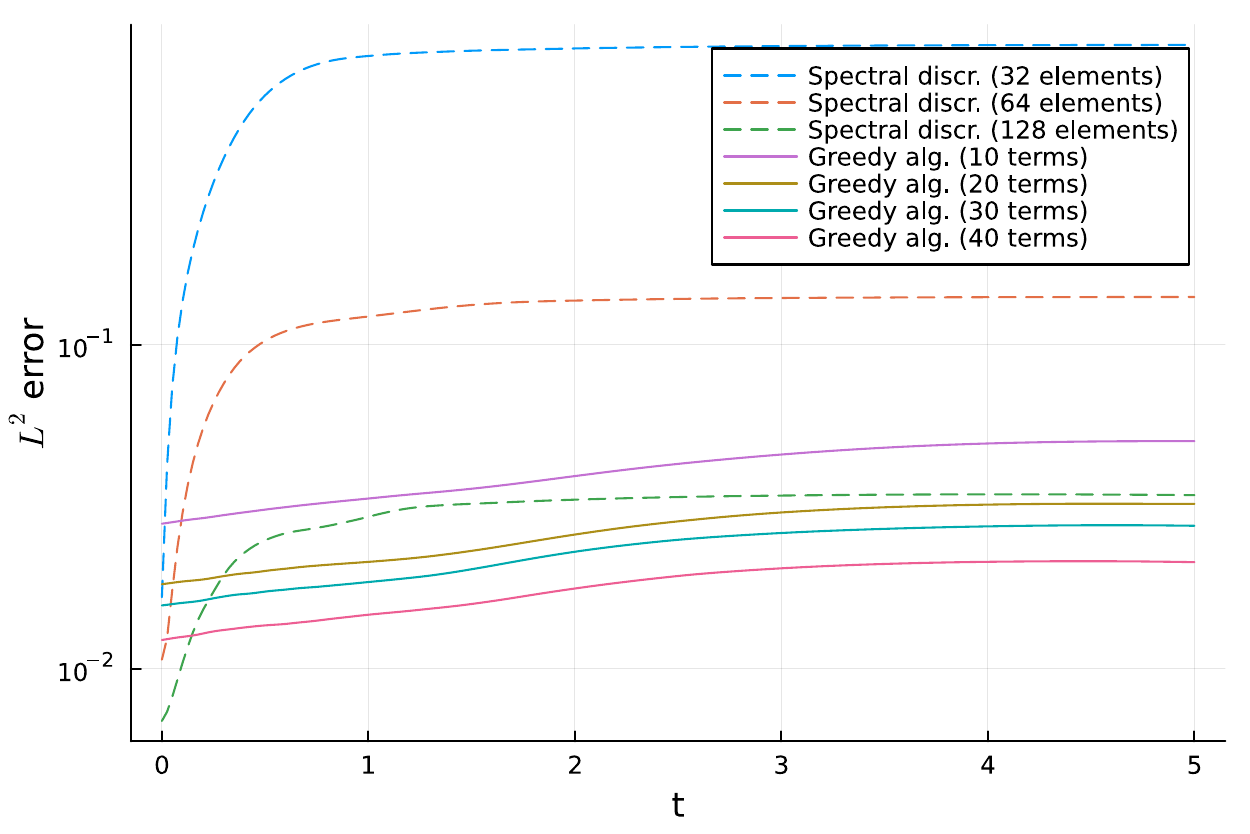}
		\caption{Evolution of the $\Ltwo$ error with respect to time for several methods and discretization. The error is obtained by comparing to a reference solution computed with a fine spectral discretization (128 per direction).}
		\label{fig:error evolution 3d}
	\end{subfigure}
	\hfill
	\begin{subfigure}[t]{0.49\textwidth}
		\centering
		\includegraphics[width=\textwidth]{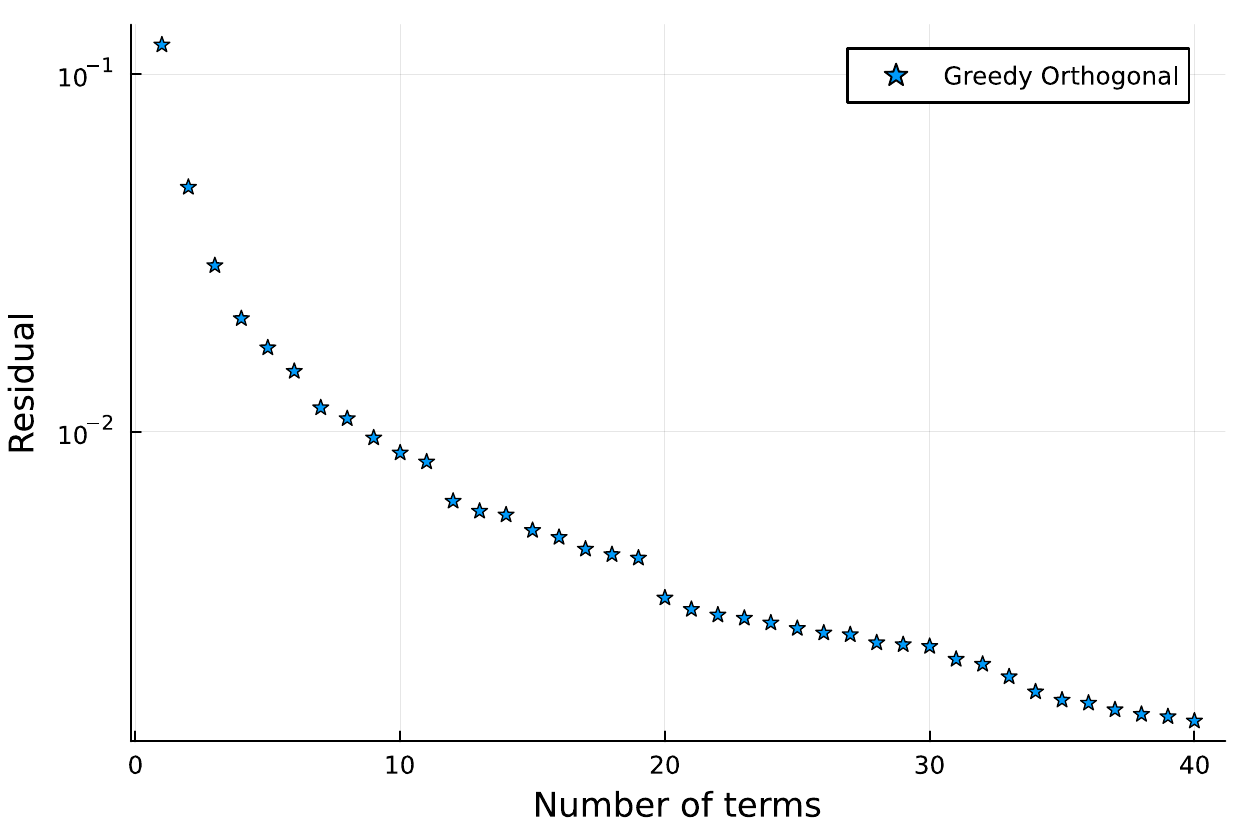}
		\caption{Decay of the residual with respect to the number of terms computed by the greedy algorithm.}
		\label{fig:residual evolution3D}
	\end{subfigure}
	\caption{} \label{fig:Gaussian3D error}
\end{figure}

\begin{table}[H]
  \centering
    \begin{tabular}{|l|c|c|c|c|}
        \hline
        \makecell{Number of points}
        & \makecell{32} 
        & \makecell{64} 
        & \makecell{80}
        & \makecell{128} \\
        \hline
        \makecell{Computation\\cost (cpu s)} 
        & 1
        & 14 
        & 25
        & 202 \\
        \hline
    \end{tabular}
    \caption{Computation times for the spectral method with respect to different number of points in each direction.}
    \label{tab:computation times spectral}

    \begin{tabular}{|l|c|c|c|c|}
        \hline
        \makecell{Number of terms}
        & \makecell{10} 
        & \makecell{20} 
        & \makecell{30}
        & \makecell{40} \\
        \hline
        \makecell{Computation\\cost (cpu s)} 
        & 18
        & 52
        & 106
        & 180 \\
        \hline
    \end{tabular}
    \caption{Computation times for the greedy method with respect to the number of terms.}
    \label{tab:computation times greedy}
\end{table}

\appendix

\section{Proof of \Cref{prop:Gaussian dictionary}} \label{app:Proof of weak closedness}

\begin{proposition} \label{prop:Gaussian density}
    Let $d \geq 1$, $\nu \in \N^d$, and $1 \leq r < \infty$.
    Then $\setspan \mathcal G_\nu$ is a dense subset of $\Lspacesp{r}{\R^d}$.
\end{proposition}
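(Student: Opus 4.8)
The plan is to exploit duality together with the injectivity of the Fourier transform on $\Lone$. First I would reduce the claim to the density of $\setspan \mathcal{G}$ alone: since the constant polynomial $P \equiv 1$ belongs to $\C^\nu[x_1, \dots, x_d]$ for every $\nu$ (it corresponds to the multi-index $(0,\dots,0)$, and $0 \le \nu_i$), we have $\mathcal{G} = \mathcal{G}_{(0,\dots,0)} \subseteq \mathcal{G}_\nu$, hence $\setspan \mathcal{G} \subseteq \setspan \mathcal{G}_\nu$. It therefore suffices to prove that $\setspan \mathcal{G}$ is dense in $\Lspacesp{r}{\R^d}$, which disposes of the polynomial weight $\nu$ entirely.

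By the Hahn--Banach theorem, a linear subspace of $\Lspacesp{r}{\R^d}$ is dense if and only if the only continuous linear functional vanishing on it is the zero functional. Since $1 \le r < \infty$, the dual of $\Lspacesp{r}{\R^d}$ is $\Lspacesp{r'}{\R^d}$, where $r'$ is the conjugate exponent ($r' = \infty$ when $r = 1$). I would thus fix $h \in \Lspacesp{r'}{\R^d}$ realizing a functional that annihilates $\mathcal{G}$, that is $\int_{\R^d} g(x)\, h(x)\, dx = 0$ for every $g \in \mathcal{G}$, and aim to show $h = 0$ almost everywhere (complex conjugation in the realization of the pairing is immaterial for the final conclusion).

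The key step is to test this annihilation condition against the explicit one-parameter subfamily obtained by fixing $q = 0$, $A = I_d$, $B = 0$ and $a = 1$, namely $g_p(x) = \eexp^{-\frac12 \abs{x}^2}\, \eexp^{i p \cdot x} \in \mathcal{G}$ for $p \in \R^d$. Setting $F(x) = \eexp^{-\frac12 \abs{x}^2} h(x)$, the hypothesis reads $\int_{\R^d} F(x)\, \eexp^{i p \cdot x}\, dx = 0$ for all $p \in \R^d$, i.e.\ the Fourier transform of $F$ vanishes identically. A Hölder estimate shows $F \in \Lone$ (the Gaussian lies in every $\Lspace s$ with $1 \le s \le \infty$, in particular in $\Lspace r$, which is conjugate to $\Lspace{r'}$), so by injectivity of the Fourier transform on $\Lone$ we obtain $F = 0$ almost everywhere; since the Gaussian factor never vanishes, $h = 0$ almost everywhere, which concludes.

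The argument is essentially routine, and I would not expect a genuine obstacle: the only points requiring minor care are the verification that $F \in \Lone$ via Hölder --- including the limiting case $r = 1$, $r' = \infty$ where the estimate pairs $h \in \Linf$ with a Gaussian in $\Lone$ --- and the correct identification of the dual space so that the Hahn--Banach criterion applies. Note also that fixing $q=0$ suffices, so translations need not be varied.
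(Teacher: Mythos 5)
Your proof is correct, but at the decisive step it takes a genuinely different route from the paper's. Both arguments start the same way: reduce to the sub-family $\mathcal G \subseteq \mathcal G_\nu$ (the paper does this only implicitly, since it ends up testing against plain Gaussians) and apply the Hahn--Banach duality criterion to produce an annihilator $h \in \Lspacesp{r'}{\R^d}$. The paper then varies the \emph{center and width}: testing $h$ against the translates $x \mapsto g_\varepsilon(y - x)$ of the heat kernel $g_\varepsilon(x) = (\varepsilon\pi)^{-d/2}\eexp^{-\abs{x}^2/(2\varepsilon)}$ shows $g_\varepsilon \ast h = 0$ for every $\varepsilon > 0$, and since $g_\varepsilon \ast h \to h$ in $\Distribsetsp{\R^d}$ as $\varepsilon \to 0$ (approximate identity), $h = 0$. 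You instead fix the envelope and vary the \emph{modulation}: testing against $\eexp^{-\abs{x}^2/2}\eexp^{ip\cdot x}$ for all $p \in \R^d$ says that the Fourier transform of $F = \eexp^{-\abs{\cdot}^2/2}h$ vanishes identically; Hölder places $F$ in $\Lone$ (including the endpoint $r = 1$, $r' = \infty$, which you handle correctly), Fourier uniqueness on $\Lone$ gives $F = 0$ a.e., and the nonvanishing of the Gaussian yields $h = 0$. The two routes are dual to each other in an informal sense --- convolution against all scaled translates versus pairing with all frequencies --- and each uses a strict subfamily of $\mathcal G$ that the other does not: the paper never varies $p$, you never vary $q$ or $Q$, as you note. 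What your version buys is an argument that stays entirely within $\Lone$ and needs no distribution theory, at the price of invoking the injectivity of the Fourier transform; the paper's mollification argument is Fourier-free but relies on the distributional convergence of the approximate identity. Both are complete and of comparable length.
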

\begin{proof}
    Assume the conclusion does not hold. Then, there exists $u \in (\Lspacesp{r}{\R^d})' = \Lspacesp{r'}{\R^d}$ (where $r'$ is the conjugated exponent of $r$) such that $u \neq 0$ and $\int_{\R^d} \diff x u(x) g(x) = 0$ for any $g \in \mathcal G_\nu$.
    For $\varepsilon$, let $g_\varepsilon(x) = (\varepsilon \pi)^{- \frac d 2} \eexp^{- \frac{\abs x^2}{2\varepsilon}}$.
    Then, $g_\varepsilon \ast u = 0$ for any $\varepsilon > 0$.
    On the other hand, $g_\varepsilon \ast u \to u$ in $\Distribsetsp{\R^d}$ as $\varepsilon \to 0$, hence $u = 0$, which is a contradiction.
\end{proof}

\begin{proposition}\label{prop:General gaussian closedness}
    Let $d \geq 1$, $\nu \in \N^d$, and $1 \leq r < \infty$.
    Let $(g_k)_{k \in \N} \subset \mathcal G_\nu$.
    For all $k\in \mathbb{N}$, let $P_k \in \C^\nu[X]$, $p_k, q_k \in \R^d$, $A_k \in \mathcal{S}_d^{+,*}$, $B_k\in \mathcal S_d$ so that for all $x\in \mathbb{R}^d$, $g_k(x) = P_k(x) \eexp^{- \frac 1 2 (x - q_k) \cdot Q_k(x - q_k) } \eexp^{i p_k \cdot (x - q_k)}$  with $Q_k = A_k + i B_k$. Let us assume that:
    \begin{itemize}
        \item $\sup_{k \in \N} \norm{g_k}_{\Lspace r} < \infty$;
        \item there exists $g \in \Lone_{loc}(\R^d)$ such that $\displaystyle g_k \mathop{\rightharpoonup}^{\Distribset}_{k\to +\infty} g$.
    \end{itemize}
    Then, one of the following assertions must hold:
	\begin{enumerate}
		\item[(i)] $g = 0$;
		\item[(ii)] $g(x) = P(x) \eexp^{- \frac 1 2 (x-q)\cdot Q(x - q) } \eexp^{i p \cdot (x - q)}$ is an element of $\mathcal G_\nu \setminus \quickset 0$. Moreover, $\displaystyle a_k \mathop{\longrightarrow}_{k\to +\infty} a$, $\displaystyle p_k \mathop{\longrightarrow}_{k\to +\infty} p$, $\displaystyle q_k \mathop{\longrightarrow}_{k\to +\infty} q$, $\displaystyle Q_k \mathop{\longrightarrow}_{k\to +\infty} Q$, $\displaystyle P_k \mathop{\longrightarrow}_{k\to +\infty} P$, and $\displaystyle \norm{g_k - g}_{\Lspace s} \mathop{\longrightarrow}_{k\to +\infty} 0$ for any $1 \leq s \leq \infty$.
	\end{enumerate}
	As a consequence, $\mathcal G_\nu$ is a weakly closed set in $\Lspacesp{r}{\R^d}$. \\
\end{proposition}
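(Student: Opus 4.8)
The plan is to extract subsequences along which every parameter of $g_k$ converges in a suitable compactification, to show that any degeneration forces the distributional limit to vanish, and that the non-degenerate regime produces an element of $\mathcal G_\nu$ together with convergence of the parameters. Since the distributional limit $g$ is unique, it suffices to reason subsequentially. Writing $R_k = a_k P_k \in \C^\nu[X]$ for the combined polynomial prefactor, I extract (up to a subsequence) $q_k \to q^\ast$ in the one-point compactification of $\R^d$, the eigenvalues of $A_k$ to limits in $[0,+\infty]$ with converging eigenvectors, a coefficient-space norm $\norm{R_k}$ to $\kappa \in [0,+\infty]$ with $R_k/\norm{R_k}$ converging, and similarly $p_k$, $B_k$ to limits in the corresponding compactifications. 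The \emph{good regime} is the one where $q^\ast \in \R^d$, all eigenvalues of $A_k$ tend to finite nonzero limits (so $A_k \to A \in \mathcal S_d^{+,*}$), $\kappa \in (0,+\infty)$, and $p_k, B_k$ stay bounded.

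First I treat the envelope. The modulus $\abs{g_k(x)} = \abs{R_k(x)}\, \eexp^{-\frac12 (x-q_k)\cdot A_k(x-q_k)}$ depends only on $(R_k, q_k, A_k)$, and $\norm{g_k}_{\Lspace r} = \norm{\,\abs{g_k}\,}_{\Lspace r}$ is bounded. Using explicit Gaussian integrals, this $L^r$ bound rules out (or converts into $g=0$) each envelope degeneration: if $\abs{q_k}\to\infty$ the mass escapes every compact set; if an eigenvalue of $A_k$ tends to $0$ the bound forces the amplitude to vanish, so $g_k\to 0$ pointwise on compacts; if an eigenvalue tends to $+\infty$ the local mass $\int_{B_R}\abs{g_k}$ tends to $0$ for $r>1$ (and for $r=1$ a nonzero limit would be a Dirac-type distribution, contradicting $g\in \Lone_{loc}(\R^d)$); the degenerate cases $\kappa\in\{0,+\infty\}$ are excluded the same way. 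Hence either $g=0$, or we are in the good regime with $q_k\to q$, $A_k\to A\in\mathcal S_d^{+,*}$ and $R_k\to R\neq0$, in which case the envelopes converge and a uniform Gaussian majorant gives $\abs{g_k}\to\abs{R}\,\eexp^{-\frac12(\cdot-q)\cdot A(\cdot-q)}$ in $\Lspace s$ for every $s$.

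It remains to handle the oscillatory parameters $p_k$ and $B_k$ in the good envelope regime. Testing against $\phi\in\Ccinfsetsp{\R^d}$, the amplitude $R_k(x)\,\eexp^{-\frac12(x-q_k)\cdot A_k(x-q_k)}\phi(x)$ converges to a fixed compactly supported smooth function, against which we integrate the unimodular oscillation $\eexp^{-\frac i2 (x-q_k)\cdot B_k(x-q_k)}\eexp^{ip_k\cdot(x-q_k)}$. If $\abs{p_k}\to\infty$ with $B_k$ bounded, the integral is the Fourier transform of an $L^1$ function at a frequency going to infinity and tends to $0$ by Riemann--Lebesgue; if $\norm{B_k}\to\infty$, diagonalising $B_k$ and applying a stationary-phase/Fresnel estimate at the nondegenerate critical point $x=q_k$ yields a bound of order $\bigO{\norm{B_k}^{-1/2}}\to 0$. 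In both cases $g=0$. Otherwise $p_k\to p$, $B_k\to B$, and $g_k$ converges pointwise a.e.\ to $g(x)=R(x)\,\eexp^{-\frac12(x-q)\cdot Q(x-q)}\eexp^{ip\cdot(x-q)}\in\mathcal G_\nu\setminus\{0\}$ with $Q=A+iB$; the uniform Gaussian majorant upgrades this to $\norm{g_k-g}_{\Lspace s}\to 0$ for all $1\le s\le\infty$. Splitting $R=aP$ according to the chosen normalisation recovers $a_k\to a$, $P_k\to P$, $Q_k\to Q$, and since a nonzero element of $\mathcal G_\nu$ determines its parameters uniquely, the convergences hold for the full sequence. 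The weak closedness then follows, since a weakly convergent sequence in $\Lspacesp r{\R^d}$ is bounded and converges in $\Distribsetsp{\R^d}$ with limit in $\Lspacesp r{\R^d}\subset\Lone_{loc}(\R^d)$, so the dichotomy places the limit in $\mathcal G_\nu$ (note $0\in\mathcal G_\nu$).

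The main obstacle is the quadratic oscillatory case $\norm{B_k}\to\infty$: unlike every envelope degeneration, it is invisible to the $L^r$ bound (the modulus does not see $B_k$) and must be ruled out by a genuine oscillatory-integral argument, with the added subtlety that in dimension $d>1$ only some eigenvalues of $B_k$ may blow up, so the stationary-phase decay has to be extracted directionally while the converging Gaussian envelope controls the remaining variables.
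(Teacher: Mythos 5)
Your proposal is correct and takes essentially the same route as the paper's proof: the same compactness--dichotomy over the parameters (boundedness of suitably normalized polynomial coefficients via the $\Lspace r$ bound, spreading, escape to infinity, and concentration of the Gaussian envelope each forcing $g=0$, with the $\Lone_{loc}$ hypothesis invoked precisely for the $r=1$ concentration case), a one-directional Fresnel/stationary-phase estimate along the blowing-up eigendirection of $B_k$ (the paper's Lemma~\ref{lem:Quadratic phase estimate}), non-stationary phase for $\abs{p_k}\to\infty$, and finally uniqueness of the subsequential parameter limits to upgrade to full-sequence convergence and weak closedness. You even identify the same crux the paper isolates, namely that the $B_k$-degeneration is invisible to the modulus and must be handled directionally by an oscillatory-integral argument.
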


\begin{remark}
    The assumption $g \in \Lone_{loc}(\R^d)$ can be omitted when $1 < r < \infty$, as it is automatically satisfied that $g \in \Lspacesp{r}{\R^d}$. However, it becomes essential in the case $r = 1$, as illustrated by the following example: Consider $g_k(x) = \sqrt{\frac{1}{2\pi k}} \eexp^{- \frac k 2 x^2}$, then $\norm{g_k}_{\Lonesp \R} = 1$ for any $k$, and $\displaystyle g_k \mathop{\rightharpoonup}^{\Distribset}_{k\to +\infty} \delta_0$.
\end{remark}

\medskip

We first establish a useful result on oscillatory integrals, which is a weaker version of Theorem XI.15 p.39 in \cite{reed_iii_1979}. For the sake of completeness, we include the proof which is much easier in our rather simple setup.
\begin{lemma} \label{lem:Quadratic phase estimate}
	Let $d\in \mathbb{N}^*$ and $A \in \matsetsquare \R d$ be a nonsingular symmetric matrix.
	Then, there exists a constant $C > 0$ such that
	\[
		\forall u \in \mathcal D(\R^d), \quad \forall \omega > 0, \quad
		\abs{\int_{\mathbb{R}^d}\diff x\eexp^{\frac {i\omega} 2 x\cdot Ax } u(x) }
		\leq	C \omega^{-d/2} \sup_{\alpha \in \mathbb{N}^d, \; \abs \alpha \leq d+1} \norm{\partial^\alpha u}_{\Lspace 1}.
	\]
\end{lemma}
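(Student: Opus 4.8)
The plan is to reduce the estimate to a product of one-dimensional Fresnel integrals by diagonalizing $A$, and then to pass to the Fourier side, where the quadratic-phase kernel acts simply as a Fourier multiplier whose modulus is uniformly bounded by $(2\pi)^{d/2}\omega^{-d/2}\abs{\det A}^{-1/2}$. Concretely, since $A$ is real symmetric there is an orthogonal matrix $O$ with $\transpose O A O = \mathrm{diag}(\lambda_1,\dots,\lambda_d)$ and all $\lambda_j \neq 0$. Substituting $x = Oy$ leaves the Lebesgue measure invariant and turns the integral into $\int_{\R^d}\eexp^{\frac{i\omega}{2}\sum_j\lambda_j y_j^2}\,v(y)\diff y$ with $v = u\circ O \in \TestFunctionsetsp{\R^d}$. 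The chain rule together with $\abs{\det O} = 1$ yields $\norm{\partial^\alpha v}_{\Lspace 1}\le C\sup_{\abs\beta\le\abs\alpha}\norm{\partial^\beta u}_{\Lspace 1}$, so it suffices to prove the claim for the diagonal phase.

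Next I would compute the Fourier transform of the diagonal kernel. For each $j$ the elementary Fresnel identity $\int_\R \eexp^{\frac{i\omega}{2}\lambda_j t^2}\eexp^{-i\xi_j t}\diff t = \sqrt{2\pi/(\omega\abs{\lambda_j})}\,\eexp^{i\frac\pi4\sgn\lambda_j}\,\eexp^{-i\xi_j^2/(2\omega\lambda_j)}$ holds, and taking the product over $j$ gives, as a tempered distribution,
\[
\Fourier\Bigl[\eexp^{\frac{i\omega}{2}\sum_j\lambda_j y_j^2}\Bigr](\xi) = \frac{(2\pi)^{d/2}}{\omega^{d/2}\abs{\det A}^{1/2}}\,\eexp^{i\frac\pi4\sum_j\sgn\lambda_j}\,\eexp^{-\frac{i}{2\omega}\sum_j\xi_j^2/\lambda_j}.
\]
The crucial point is that the modulus of this multiplier equals $(2\pi)^{d/2}\omega^{-d/2}\abs{\det A}^{-1/2}$, uniformly in $\xi$. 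Writing $v$ through its inverse Fourier transform and exchanging the order of integration then produces $\bigl|\int \eexp^{\frac{i\omega}{2}\sum_j\lambda_j y_j^2}v\,\diff y\bigr| \le (2\pi)^{-d/2}\omega^{-d/2}\abs{\det A}^{-1/2}\,\norm{\Fourierhat v}_{\Lspace 1}$.

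It then remains to control $\norm{\Fourierhat v}_{\Lspace 1}$ by $d+1$ derivatives in $L^1$. Since $(1+\abs\xi^2)^{(d+1)/2}\le C\sum_{\abs\alpha\le d+1}\abs{\xi^\alpha}$ and $\Fourierhat{\partial^\alpha v}(\xi) = (i\xi)^\alpha\Fourierhat v(\xi)$, the elementary bound $\norm{\Fourierhat f}_{\Linf}\le\norm f_{\Lspace 1}$ gives $(1+\abs\xi^2)^{(d+1)/2}\abs{\Fourierhat v(\xi)}\le C\sup_{\abs\alpha\le d+1}\norm{\partial^\alpha v}_{\Lspace 1}$. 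Dividing and integrating against $\int_{\R^d}(1+\abs\xi^2)^{-(d+1)/2}\diff\xi < \infty$, which converges precisely because $d+1 > d$, yields $\norm{\Fourierhat v}_{\Lspace 1}\le C'\sup_{\abs\alpha\le d+1}\norm{\partial^\alpha v}_{\Lspace 1}$. Combining the three steps delivers the stated inequality with $C$ proportional to $\abs{\det A}^{-1/2}$.

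The main obstacle is the rigorous justification of the Fourier computation and of the interchange of integrals, since the Fresnel kernel is not integrable and the multiplier identity holds only in the distributional (or conditionally convergent) sense. I would handle this by the standard Gaussian regularization: replace $\frac{i\omega}{2}\lambda_j$ by $-\varepsilon + \frac{i\omega}{2}\lambda_j$ with $\varepsilon > 0$, for which every integral is absolutely convergent and Fubini applies, compute the elementary complex-Gaussian integral $\int_\R\eexp^{-(\varepsilon-\frac{i\omega\lambda_j}{2})t^2}\eexp^{-i\xi_j t}\diff t = \sqrt{\pi/(\varepsilon-i\omega\lambda_j/2)}\,\eexp^{-\xi_j^2/(4(\varepsilon-i\omega\lambda_j/2))}$ with the principal branch of the square root, and let $\varepsilon\to 0^+$. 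Dominated convergence, using that $v\in\TestFunctionset$ forces $\Fourierhat v\in\Lspace 1$, then transfers the limit through the $\xi$-integral and produces both the displayed bound and the correct unimodular phase $\eexp^{i\frac\pi4\sgn\lambda_j}$.
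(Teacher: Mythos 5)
Your proof is correct and takes essentially the same route as the paper's: both pass to the Fourier side, exploit that the Fourier transform of the quadratic-phase kernel is a unimodular multiplier of modulus $(2\pi)^{d/2}\omega^{-d/2}\abs{\det A}^{-1/2}$, and then bound $\norm{\Fourier u}_{\Lspace 1}$ by $\sup_{\abs\alpha \leq d+1}\norm{\partial^\alpha u}_{\Lspace 1}$ using the integrability of $(1+\abs\xi)^{-(d+1)}$ on $\R^d$. The only differences are presentational: you derive the multiplier identity from scratch via diagonalization and Gaussian regularization ($\varepsilon \to 0^+$), whereas the paper simply quotes the distributional formula for $\Fourier\bigl(\eexp^{\frac{i\omega}{2}x\cdot Ax}\bigr)$ and applies Parseval directly, so your write-up is in fact somewhat more self-contained on a point the paper takes for granted.
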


\begin{proof}
	Let us denote here by $\mathcal F$ the Fourier transform and recall here the equality in the sense of tempered distributions for some $\omega>0$: for $\xi \in \mathbb{R}^d$,
	\[
		\Fourier\left(\eexp^{\frac {i\omega} 2 x\cdot Ax}\right)(\xi)
		=	\frac{(2\pi)^{d/2} \eexp^{\frac{i\pi}{4} \sgn A}}{\abs{\det(\omega A)}^{1/2}} \eexp^{- \frac i {2\omega} \xi \cdot A^{-1} \xi},
	\]
    where $\sgn A$ is the signature of $A$, \textit{i.e.} the difference between the number of positive and negative eigenvalues.
	Therefore, for all $u\in \mathcal D(\mathbb{R}^d)$, denoting by $C$ an arbitrary positive constant independent of $\omega$ and $u$,
	\[
	\begin{split}
		\abs{\int_{\mathbb{R}^d} \diff x \eexp^{\frac {i\omega} 2 x^TAx } u(x)}
		&=	(2\pi)^{-d} \abs{\int_{\mathbb{R}^d} \diff \xi \Fourier \left(\eexp^{\frac {i\omega} 2 x^TAx}\right)(\xi) \Fourier u(\xi)}
		\leq	C \omega^{-d/2} \int_{\mathbb{R}^d} \diff \xi \abs{\Fourier u(\xi)}	\\
		&\leq	C \omega^{-d/2} \int_{\mathbb{R}^d} \diff \xi (1 + \abs \xi)^{-(d+1)} (1 + \abs \xi)^{d+1} \abs{\Fourier u(\xi)}	\\
		&\leq	C  \omega^{-d/2} \norm{(1 + \abs \xi)^{d+1} \Fourier u}_{\Lspace \infty}
		\leq	C \omega^{-d/2} \sup_{\alpha \in \mathbb{N}^d, \; \abs \alpha \leq d+1} \norm{\partial^\alpha u}_{\Lspace 1}.
	\end{split}
	\]
\end{proof}

\begin{proof}[Proof of \Cref{prop:General gaussian closedness}]
    Throughout the proof, we use the same notation as in \Cref{prop:General gaussian closedness}.
    Let us first define the following quantities for any $k \in \N$:
    \begin{itemize}
        \item $\alpha_k = \norm{A_k}, \quad \text{and} \quad \beta_k = \norm{B_k} \quad \text{($\norm \cdot$ holds for the usual operator norm on $\R^{d \times d}$)};$
        \item \( \delta_k = \inf_{\substack{x \in \R^d \\ \abs x = 1}} x \cdot A_k x > 0 \); 
        \item For any $k \in \N$, $A_k = O_k \Lambda_k \transpose{O_k}$ with $O_k$ some real orthogonal matrix and $\Lambda_k = {\rm diag}(\lambda_k)$ where $\lambda_k = \lambda_{k, 1}, ..., \lambda_{k, d}$ with $0 < \lambda^k_1 \leq \lambda^k_2 \leq ... \leq \lambda^k_d$;
        \item For any $k \in \N$, $B_k = V_k M_k \transpose{V_k}$ with $V_k$ some real orthogonal matrix and $M_k = {\rm diag}(\mu_{k, 1}, ..., \mu_{k, d})$ and $\abs{\mu_{k, 1}} \leq ... \leq |\mu_{k, d}|$;
        \item For any $\theta \in \N^d, x \in \R^d$ and $\mu = (\mu_1, ..., \mu_d) \in (\R_+^*)^d$, let us define $x^\theta = x_1^{\theta_1} ... x_d^{\theta_d}$, and $h_\theta(\mu, x) = (\mu_1 ... \mu_d)^{\frac{1}{2r}} (\sqrt{\mu_1} x_1)^{\theta_1} ... (\sqrt{\mu_d} x_d)^{\theta_d}$.
        Then, there exists some $a_{k, \theta} \in \C$ such that
        \[
            P_k(x)
            =   \sum_{\theta \leq \nu} a_{k, \theta} h_\theta(\lambda_k, \transpose{O_k} (x - q_k)).
        \]
    \end{itemize}

    From now on, \underline{let us assume that $g$ is different from $0$}. We will prove successively that:
    \begin{enumerate}
        \item The coefficients $a_{k, \theta}$ are uniformly bounded with respect to \( k \); otherwise, the norm of \( (g_k) \) would become unbounded.
        \item There exists a \( \delta > 0 \) such that \( \delta_k \geq \delta \) for any \( k \in \mathbb{N} \); otherwise, the sequence \( (g_k) \) would spread, causing the limit \( g \) to be zero.
        \item The sequence \( q_k \) is uniformly bounded with respect to \( k \); otherwise, the mass of \( g_k \) would escape to infinity.
        \item The sequence \( \alpha_k \) is uniformly bounded with respect to \( k \) to prevent mass concentration on a set of measure zero.
        \item The sequence \( \beta_k \) is uniformly bounded with respect to \( k \); otherwise, oscillations would cause the limit to vanish.
        \item The sequence \( p_k \) is uniformly bounded with respect to \( k \) for the same reason as above.
    \end{enumerate}

    {\bfseries Step~1: The $a_{k, \theta}$ are bounded uniformly}
    
    \medskip
    For any $k \in \N$, we obtain from the change of variable $x = q_k + O_k \Lambda_k^{- \frac 1 2} y$
    \[
    \begin{split}
        \norm{g_k}_{\Lspace r}
        &=  \left( \int \diff x \abs{\sum_{\theta \leq \nu} a_{k, \theta} h_\theta(\lambda_k, \transpose{O_k} (x - q_k)) \eexp^{- \frac 1 2 A_k(x - q_k) \cdot (x - q_k)}}^r \right)^{\frac 1 r}    \\
        &=  \left( \int \diff y \abs{\sum_{\theta \leq \nu} a_{k, \theta} y^\theta \eexp^{- \frac 1 2 \abs y^2}}^r \right)^{\frac 1 r} \\
        &\geq   \frac 1 C \sup_{\theta \leq \nu} \abs{a_{k, \theta}}.
    \end{split}
    \]
    For the last inequality, we used the fact that the $(y \mapsto y^\theta)_{\theta \leq \nu}$ are linearly independent, and that all norms are equivalent in finite dimension.
    Since, by assumption, the $(g_k)$ are uniformly bounded in $\Lspace r$, we obtain the desired uniform boundedness for the $a_{k, \theta}$.

    \medskip
    {\bfseries Step~2: There is a $\delta > 0$ such that $\delta_k \geq \delta$ for all $k$}
    
    \medskip
    Let us assume the assertion does not hold. Then, up to a subsequence, we may assume that $\lim_{k \to \infty} \delta_k = \lim_{k \to \infty} \lambda_{k, 1} = 0$. \\
    Let $\varphi \in \TestFunctionsetsp{\R^d}$, and $R > 0$ be such that $\supp \varphi \subset \openball 0 R$.
    Then, the change of variable $x = q_k + O_k \Lambda_k^{-\frac 1 2} y$ yields:
	\[
	\begin{split}
		\abs{\dualitybracket{g_k}{\varphi}_{\Distribset, \TestFunctionset}}
        \leq    \norm{\varphi}_{\Lspace{r'}} \norm{g_k}_{\Lspacesp{r}{\openball 0 R}}
		&\leq	\norm{\varphi}_{\Lspace{r'}}
		\left( \int_{\Lambda_k^{\frac 1 2} \transpose{O_k} \openball{-q_k}{R}} \diff y \abs{\sum_{\theta \leq \nu} a_{k, \theta} y^\theta \eexp^{- \frac 1 2 \abs y^2}}^r \right)^{\frac 1 r}	\\
		&\leq	C \norm{\varphi}_{\Lspace{r'}} \sum_{\theta \leq \nu}
		\left( \int_{\Lambda_k^{\frac 1 2} \openball{-  \transpose{O_k} q_k}{R}} \diff y \abs{y^\theta \eexp^{- \frac 1 2 \abs y^2}}^r \right)^{\frac 1 r}.
	\end{split}
	\]
	The functions $\left(y^\theta \eexp^{- \frac 1 2 \abs y^2}\right)^r$ are integrable on $\R^d$. 
	If $\lim_{k \to \infty} \lambda_{k, 1} = 0$, then we have that 
	\[
		\lim_{k \to \infty} \int_{\Lambda_k^{\frac 1 2} \openball{-  \transpose{O_k} q_k}{R}} \diff y \abs{y^\theta \eexp^{- \frac 1 2 \abs y^2}}^r =0,
	\]
    since the integration domain is shrinking along the first dimension. \\
    This implies that $g = 0$ in $\Distribsetsp{\R^d}$, which is a contradiction.

    \medskip
    {\bfseries Step~3: The $q_k$ are bounded uniformly with respect to $k$}
    
    \medskip
    If the assertion does not hold, up to a subsequence, we may assume that $\lim_{k \to \infty} \abs{q_k} = \infty$. 
    Since by Step 2, the $(g_k)$ do not spread, then center of mass of the $g_k$ goes infinitely away. 
    This means that $g=0$ which is again a contradiction.

    \medskip
    {\bfseries Step~4: The $\gamma_k$ are bounded uniformly with respect to $k$}
    
    \medskip
    Once again, we assume by contradiction and up to an extraction that $\lim_{k \to \infty} \gamma_k = \lim_{k \to \infty} \lambda_{k, d} = \infty$. This time, up to another extraction, we also assume that $q_k$ and $O_k$ converge respectively to some $q_\infty$ and $O_\infty$. By defining $\tilde g_k(y) = g_k(q_k + O_k y)$, it can actually be proved that $\displaystyle \tilde g_k \mathop{\rightharpoonup}_{k\to +\infty}^{\Distribset} \tilde g$ with $\tilde g(y) = g(q_\infty + O_\infty y)$. This is a consequence of the fact that $\sup_k \norm{\tilde g_k}_{\Lspace r} = \sup_k \norm{g_k}_{\Lspace r} < \infty$, and that for any $\varphi \in \TestFunctionsetsp{\R^d}$ one has $\lim_{k \to \infty} \norm{\varphi(\transpose{O_k} (\cdot - q_k)) - \varphi(\transpose{O_\infty}(\cdot - q_\infty))}_{\Lspace{r'}} = 0$. \\
    Let $\varphi \in \TestFunctionsetsp{\R^d}$ be such that $\supp \varphi \subset \quickset{\abs{x_d} \geq \varepsilon}$ for some $\varepsilon > 0$. Then, following similar computation as previously, we obtain that
    \[
        \abs{\dualitybracket{\tilde g_k}{\varphi}_{\Distribset, \TestFunctionset}}
        \leq    C \norm{\varphi}_{\Lspace{r'}} \sum_{\theta_d = 0}^{\nu_d} \left( \int_{\quickset{\abs t \geq \lambda_{k, d} \varepsilon}} \diff t \abs{t}^{\theta_d r} \eexp^{-\frac r 2 t^2}  \right)^{\frac 1 r}
        \xrightarrow[k \to \infty]{} 0.
    \]
    We conclude that $\tilde g$ is an $\Lone_{loc}$ function equal to zero almost everywhere outside of the hyperplane $\quickset{x_d = 0}$, hence $\tilde g = 0$, and consequently $g = 0$, which is again a contradiction.

    \medskip
    {\bfseries Step~5: The $\beta_k$ are bounded uniformly with respect to $k$}
    
    \medskip
    By contradiction, assume $\lim_{k \to \infty} \beta_k = \lim_{k \to \infty} \abs{\mu_{k, d}} = \infty$. \\
    Let $\tilde O_k = \transpose{O_k} V_k$, $\tilde A_k = \transpose{V_k} A_k V_k$, and $\tilde p_k = \transpose{V_k} p_k = (b_{k, 1}, ..., b_{k, d})$.
    Then the change of variable $x = q_k + V_k y$ yields
	\[
	\begin{split}
		\abs{\dualitybracket{g_k}{\varphi}_{\Distribset, \TestFunctionset}}
		&=	\abs{\int_{\R^d} \diff x \sum_{\theta \leq \nu} \varphi(x) a_{k, \theta} h_\theta(\lambda_k, \transpose{O_k}(x - q_k)) \eexp^{- \frac 1 2(x - q_k) \cdot Q_k(x - q_k)} \eexp^{i p_k \cdot (x - q_k)}}	\\
		&=	\abs{\int_\R \diff{y_d} \eexp^{- \frac i 2 \mu_{k, d} (y_d - \mu_{k, d}^{-1} b_{k, d})^2} I_k(y_d)}, \\
        &=  \abs{\int_\R \diff t \eexp^{- \frac i 2 \mu_{k, d} t^2} I_k(\mu_{k, d}^{-1} b_{k, d} + t)},
	\end{split}
	\]
    where
    \[
        I_k(y_d)
        =   \sum_{\theta \leq \nu} a_{k, \theta} \int_{\R^{d-1}} \diff{y_1} ... \diff{y_{d-1}} \varphi(q_k + V_k y) h_\theta(\lambda_k, \tilde O_k y) \eexp^{- \frac 1 2 \tilde A_k y \cdot y} \eexp^{- \frac i 2 \sum_{j=1}^{d-1} \mu_{k, j} y_j^2}  \eexp^{i \sum_{j=1}^{d-1} b_{k, j} y_j}.
    \]
    We can then apply \Cref{lem:Quadratic phase estimate} with $A = 1 \in \R^{1 \times 1}$ to obtain
    \[
        \abs{\dualitybracket{g_k}{\varphi}_{\Distribset, \TestFunctionset}}
        \leq    C \beta_k^{- \frac 1 2} (\norm{I_k}_{\Lonesp{\R}} + \norm{I_k'}_{\Lonesp{\R}} + \norm{I_k''}_{\Lonesp{\R}}).
    \]
    It follows from the previous steps of the proof that $\norm{I_k}_{\Lonesp{\R}} + \norm{I_k'}_{\Lonesp{\R}} + \norm{I_k''}_{\Lonesp{\R}}$ is bounded uniformly with respect to $k$.

    
    Then we have
    \[
        \abs{\dualitybracket{g_k}{\varphi}_{\Distribset, \TestFunctionset}}
        \leq    C \beta_k^{- \frac 1 2}
        \xrightarrow[k \to \infty]{} 0.
    \]

    \medskip
    {\bfseries Step~6: The $p_k$ are bounded uniformly with respect to $k$}
    
    \medskip
    By contradiction, assume that $\lim_{k \to \infty} \abs{p_k} = \infty$. \\
    Let $\varphi \in \TestFunctionsetsp{\R^d}$, we compute
    \[
    \begin{split}
        \dualitybracket{g_k}{\varphi}_{\Distribset, \TestFunctionset}
        =   \int_{\R^d} \diff y \eta_k(y) \eexp^{ip_k \cdot y},
        \quad \text{with} \;
        \eta_k(y)
        =   \sum_{\theta \leq \nu}   \varphi(y + q_k) h_\theta(\lambda_k, \transpose{O_k}y) \eexp^{-\frac{1}{2} Q_k y \cdot y}.
    \end{split}
    \]
    It then follows from an integration by part that
    \[
        \abs{\dualitybracket{\tilde g_k}{\varphi}_{\Distribset, \TestFunctionset}}
        =   \abs{\int_{\R^d} \diff y \eexp^{ip_k \cdot y} \frac{p_k \cdot \nabla \eta_k(y)}{\abs{p_k}^2}}
        \leq    \frac{C}{\abs{p_k}}
        \xrightarrow[k \to \infty]{} 0.
    \]

    \medskip
    {\bfseries Final step:}
    
    \medskip
    We have, for any $k \in \N$,
    \[
        g_k(x)
        =   P_k(x) \eexp^{- \frac 1 2 Q_k(x - q_k) \cdot (x - q_k)} \eexp^{ip_k \cdot (x - q_k)},
    \]
    with
    \[
        P_k(x)
        =   \sum_{\theta \leq \nu} a_{k, \theta} h_\theta(\lambda_k, \transpose{O_k}(x - q_k)).
    \]
    It follows from the previous steps that $Q_k, q_k, p_k$ are uniformly bounded, as well as all the coefficients of the polynomial $P_k$.
    Moreover, they can only have one subsequential limit, that we denote by $Q, q, p, P$, since these limits are characterized by
    \[
        g(x)
        =   P(x) \eexp^{- \frac 1 2 Q(x - q) \cdot (x - q)} \eexp^{ip \cdot (x - q)}.
    \]
    (and this is indeed an element of $\Lspacesp{r}{\R^d}$, because $\Re(Q) \geq \delta > 0$) \\
    Therefore, the four sequences converge to these subsequential limits.
\end{proof}

\printbibliography

\end{document}